\documentclass[a4paper,12pt]{article}
\usepackage{amsthm}
\usepackage{amssymb}
\usepackage{amsmath}
\usepackage{amsfonts}
\usepackage{hyperref}
\usepackage{graphicx}

\newtheorem{tw}{Theorem}[section]
\newtheorem{stw}[tw]{Proposition}
\newtheorem{lem}[tw]{Lemma}
\newtheorem{wn}[tw]{Corollary}
\theoremstyle{definition}
\newtheorem{df}[tw]{Definition}
\newtheorem{prz}[tw]{Example}
\newtheorem{uw}[tw]{Remark}
\newtheorem{fa}[tw]{}
\theoremstyle{plain}

\begin{document}
\title{Umbilical routes along geodesics and hypercycles in the hyperbolic space}
\author{
Maciej Czarnecki 
\\Uniwersytet \L\'odzki, \L\'od{\'z}, Poland \\
Wydzia\l\ Matematyki i Informatyki, Katedra Geometrii\\
e-mail: maczar@math.uni.lodz.pl
}

\maketitle

\begin{footnote}{{\it 2010 Mathematics Subject Classification.} 53C12, 53A30, 53A35.\\
{\it Keywords and phrases.} Foliation, umbilical, hypercycle, geodesic, umbilical route.}
\end{footnote}

\abstract{Given a geodesic line $\gamma$ the hyperbolic space $\mathbb H^n$ we formulate a necessary and sufficient condition for a function along this geodesic which measure the mean curvature of totally umbilical leaves of a foliation orthogonal to $\gamma$. Then we extend the result to $\gamma$ being a hypercycle i.e. a geodesic on a hypersurface equidistant from the totally geodesic one.}

\section*{Introduction}

In the geometric theory of foliations, a   question on foliations with totally umbilical leaves comes just after that on totally geodesic foliations. The last one for compact or finite volume manifolds has definite and negative answer (see \cite{CzW} for some history). In \cite{LW} Langevin and Walczak proved that on constant curvature closed manifold there are no totally umbilical foliations. For open manifolds there are geometrical classifications of totally geodesic foliations in the hyperbolic space by Ferus (\cite{F}) and Browne (\cite{Br}).

The question on totally umbilical routes along curves in the real hyperbolic space $\mathbb H^n$ was formulated in \cite{CzW}. In the paper presented (and announced in \cite{BaCz}), we give some partial answer restricting to transversals which are geodesics, horocycles, or hypercycles. More general result was obtained by the author and Langevin in \cite{CzL} --- see \ref{shadok} for a mention.

The most general result of the paper is Theorem \ref{route_hyperc} giving necessary and sufficient condition for a function along arc--length paramet\-rized hypercycle to generate totally umbilical foliations. Namely, if the hypercycle has (constant) geodesic curvature $\cos\varphi$ then this condition states that the mean curvature of leaves $h$ starts for some period from $-\sin\varphi$, ends from some time with $\sin\varphi$ and between some modified hyperbolic arcus tangent of $h$ of leaves is $(\sin\varphi )$--Lipschitz function. The condition is more visible in case of a geodesic transversal (Theorem \ref{route_geod}): $-{\rm ath}\circ h$ is $1$--Lipschitz.
Respective inequalities for differentiable $h$ appear in Theorem \ref{route_hyperc_c1} and 
Theorem \ref{route_geod_c1}. The last one is very simple $h'\ge h^2-1$ and shows a potential of change geometry: bigger if not far from totally geodesic.  

\section{Umbilical hypersurfaces of the hyperbolic space}

Umbilicity is a standard notion in Riemannian geometry and one of the easiest which is conformally invariant.

A point on a submanifold of a Riemannian manifold is called \emph{umbilical} if 
all eigenvalues of the shape operator at this point are equal. 
In this case every such eigenvalue equals the mean curvature up to sign.
Consequently, a submanifold is \emph{totally umbilical} if consists only of umbilical points and a \emph{totally umbilical foliation} of a Riemannian manifold is 
a foliation with all the leaves totally umblical.

\medskip

For the real $n$--dimensional hyperbolic space $\mathbb H^n$ consider its half--space model i.e. the
 set $\Pi^{n,+}=\{ x\in\mathbb R^n\ | \ x_n>0\}$
endowed with the Riemannian metric
$$g(X,Y)_x=\frac{1}{x_n^2}\langle X,Y\rangle$$
where $\langle .,.\rangle$ denote the standard Euclidean inner product. 

The hyperbolic distance in the half--space is given by the formula (cf. \cite{Be-Pe})
$$
d(x,y)= 2\,{\rm ath}\, \sqrt{\frac{\| \hat x-\hat y\|^2+(x_n-y_n)^2}{\| \hat x-\hat y\|^2+(x_n+y_n)^2}}
$$
where $\hat x=(x_1,\ldots ,x_{n-1})$ and analogously for $y$. Here
$${\rm ath} =(\tanh )^{-1}\ :\ t\mapsto \ln\sqrt{\frac{1+t}{1-t}}.$$
In the particular case $\Pi^{2,+}\subset \mathbb C$, 
$$d(z,w)=2\,{\rm ath}\, \left| \frac{z-w}{z-\bar w}\right|,$$
especially $d(ai,bi)=\left| \ln\frac{a}{b}\right|$ for $a,b>0$.

\medskip

Every isometry of the half--space model is a conformal diffeomorphism $\Pi^{n,+}$ on itself i.e. a composition of a horizontal translation, inversion in a sphere orthogonal to the ideal boundary or identity, and orthogonal transformation in the first $n-1$ variables (cf.
\cite{Be-Pe}).

In particular, for any two geodesic lines there is an isometry sending one to another. 

\medskip

$\mathbb H^n$ is an Hadamard manifold so in the purely metric way (cf. \cite{BH}) we could define horospheres and the ideal boundary. In the half--space model, a \emph{horosphere} is a sphere tangent to 
$\mathbb R^{n-1}\times \{ 0\}$ (without tangency point) or a hyperplane parallel to it. The \emph{ideal boundary} is a topological
$(n-1)$ sphere  $\left( \mathbb R^{n-1}\times \{ 0\} \right)\cup \{ \infty\}$.

\emph{Totally gedesic hypersurfaces} are open hemi--spheres or open half--hyperplanes orthogonal to $\mathbb R^{n-1}\times \{ 0\}$. A connected component of a set equidistant from a  
totally gedesic hypersurface is called a \emph{hypersphere}. In the half--space model,
a hypersphere is a part of a sphere or hyperplane transversely intersecting $\mathbb R^{n-1}\times \{ 0\}$
included in $\Pi^{n,+}$.

\begin{df}
We use the common name \emph{generalized hypersphere} for a complete hypersurface in $\mathbb H^n$
which is either horosphere, hypersphere or totally geodesic and attach to it its angle of intesection with the ideal boundary.

Thus a horosphere is $0$-or-$\pi$--hypersphere (depending on its end) while a totally geodesic hypersurface is a $\frac{\pi}{2}$--hypersphere.
\end{df}

\begin{stw}\label{curv}
Using orientation inside a generalized hypersphere, i.e. in the half--space "down", we observe that a hypersphere making angle $\beta$ (measured outside) with the ideal boundary has constant mean curvature $h=-\cos\beta$; this includes a horizontal horosphere of $h\equiv 1$.
\end{stw}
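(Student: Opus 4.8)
The plan is to exploit the isometry invariance of mean curvature together with the explicit description of isometries recalled above. Since every isometry of $\Pi^{n,+}$ is a conformal diffeomorphism preserving the ideal boundary, it preserves both the class of generalized hyperspheres and the (Euclidean) angle $\beta$ of intersection with $\mathbb R^{n-1}\times\{0\}$. Hence it suffices to verify the assertion on one convenient representative of each angle. I claim that a flat Euclidean half--hyperplane meeting the boundary at angle $\beta$ serves for every $\beta\in[0,\pi]$: a vertical half--hyperplane realizes $\beta=\tfrac{\pi}{2}$ (the totally geodesic case), a horizontal hyperplane $\{x_n=\mathrm{const}\}$ realizes $\beta\in\{0,\pi\}$ (the horospherical case), and a tilted half--hyperplane through a fixed $(n-2)$--flat in the boundary realizes the remaining hyperspheres. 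Every spherical representative is carried onto such a flat one by an inversion centered at a suitable ideal point (for a geodesic hemisphere any center on its boundary sphere, for a horosphere its point of tangency, for a spherical hypersphere a point of its rim), the angle $\beta$ being preserved by conformality.

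First I would fix the orientation and sign conventions by treating the horizontal case, which simultaneously calibrates the claim $h\equiv 1$. Writing the metric as $g=e^{2\phi}\delta$ with $\phi=-\ln x_n$, the Christoffel symbols are read off from $\Gamma^k_{ij}=\delta^k_i\phi_j+\delta^k_j\phi_i-\delta_{ij}\phi_k$, where $\phi_n=-1/x_n$ and $\phi_i=0$ otherwise. The hyperbolic unit normal of a flat hyperplane with Euclidean unit normal $\nu$ is $N=x_n\nu$. Taking the shape operator to be $X\mapsto\nabla_X N$ with $\nabla$ the hyperbolic connection, a direct computation of $\nabla_{\partial_i}N$ and of $\nabla_u N$ (where $u$ spans the tilt direction) shows, using $\langle\nu,u\rangle=0$, that every tangent direction is an eigenvector with the single eigenvalue $-\nu_n$, where $\nu_n=\langle\nu,e_n\rangle$. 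Thus each flat hyperplane is totally umbilical with constant mean curvature $h\equiv-\nu_n$; in particular the horizontal hyperplane with the downward normal $\nu=-e_n$ gives $h\equiv 1$, which fixes the sign convention.

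It then remains to identify $-\nu_n$ with $-\cos\beta$ for the orientation pointing ``inside/down''. For a half--hyperplane making angle $\beta$ with the boundary, the relevant unit normal (the one reducing to $-e_n$ in the horospherical limit $\beta=\pi$) is $\nu=-\sin\beta\,e_1+\cos\beta\,e_n$ in the vertical $2$--plane of the tilt, so $\nu_n=\cos\beta$ and $h\equiv-\cos\beta$, as asserted. The step I expect to be the main obstacle is exactly this orientation bookkeeping: choosing the ``inside'' normal consistently across the degenerate (horizontal, vertical) and the generic (tilted) representatives, and matching it to the angle ``$\beta$ measured outside'' so that all the signs align with the horospherical calibration $h\equiv 1$. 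Once the conformal Christoffel symbols are in hand, the curvature computation itself is routine.
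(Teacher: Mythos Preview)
Your proposal is correct and follows essentially the same route as the paper: compute the Christoffel symbols of the conformal metric $g=x_n^{-2}\delta$, deduce that for a Euclidean hyperplane with unit normal $\nu$ the shape operator is $-\nu_n\cdot\mathrm{Id}$, and then read off $\nu_n=\cos\beta$ from the tilt angle. The paper's proof is a two--line sketch citing do~Carmo and a thesis for exactly this computation; your version simply makes the reduction from spherical to flat representatives (via an inversion) and the orientation bookkeeping explicit, which the paper leaves implicit.
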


\begin{proof}
Following calculation of Christoffel symbols for a conformal change of Riemannian metric (\cite{dC2}) the second fundamental form for hyperplanes in $\Pi^{n,+}$ is easy to extract. In \cite{Lu}, Lu\.zy\'nczyk observed that the shape operator is the identity multiplied by the last coordinate of the normal vector (unit in the Euclidean norm). 
\end{proof}

\begin{stw} \label{umb_hyp}
A connected complete unbounded hypersurface of $\mathbb H^n$ is totally umbilical iff it is a generalized hypersphere.
\end{stw}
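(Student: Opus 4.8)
The plan is to prove both implications, treating the forward (classification) direction as the substantive one and assuming $n\ge 3$, so that the leaves have dimension at least two; for $n=2$ a hypersurface is a curve and umbilicity is automatic. For the \emph{if} direction I would simply invoke \ref{curv}: in the half--space model a generalized hypersphere is a piece of a Euclidean sphere or hyperplane, and by Lu\.zy\'nczyk's observation its shape operator is the last coordinate of the Euclidean unit normal times the identity. Being a scalar multiple of the identity at every point, such a hypersurface is umbilical everywhere, hence totally umbilical.

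For the \emph{only if} direction I would first show that umbilicity forces constant mean curvature. Writing $S=\lambda\,\mathrm{Id}$ for the shape operator and using that $\mathbb H^n$ has constant curvature, the normal term obstructing the Codazzi equation vanishes, i.e. $R(X,Y)\nu=0$ for tangent $X,Y$, so Codazzi reduces to $(\nabla_X S)Y=(\nabla_Y S)X$. A direct computation gives $(\nabla_X S)Y=(X\lambda)Y$, whence $(X\lambda)Y=(Y\lambda)X$ for all tangent $X,Y$; choosing $X,Y$ linearly independent (possible since $\dim\ge 2$) forces $d\lambda=0$, so $\lambda\equiv c$ is a constant.

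It then remains to identify the hypersurface from the single constant $c$. Through any of its points $p$, with the prescribed tangent hyperplane and mean curvature $c$, there passes a standard model hypersurface $\Sigma_c$, a Euclidean sphere or hyperplane, realizing exactly this first--order data; the existence of $\Sigma_c$ follows from homogeneity of $\mathbb H^n$ under its isometry group. By local uniqueness of totally umbilical hypersurfaces with prescribed tangent plane and constant mean curvature, our hypersurface coincides with $\Sigma_c$ near $p$. The set of agreement is open and closed, so connectedness places the whole hypersurface inside $\Sigma_c$, and completeness forces equality with the complete model surface. Finally, the unboundedness hypothesis discards the case $|c|>1$ of geodesic spheres, which are bounded, leaving a horosphere ($|c|=1$), a hypersphere ($0<|c|<1$), or a totally geodesic hypersurface ($c=0$), in every case a generalized hypersphere.

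The Codazzi computation is routine; the main obstacle is the identification step, namely making precise the local uniqueness of umbilical hypersurfaces with a prescribed constant mean curvature and promoting the resulting local agreement to a global one through connectedness and completeness, while correctly matching the value of $c$ to the type of generalized hypersphere and invoking unboundedness to exclude the bounded geodesic spheres.
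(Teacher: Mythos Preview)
Your argument is correct but follows a different route from the paper. The paper exploits conformal invariance: since the half--space model is conformally Euclidean and umbilicity is preserved by conformal diffeomorphisms, a totally umbilical hypersurface of $\mathbb H^n$ is automatically totally umbilical in the Euclidean sense, hence (by the classical Euclidean classification cited from \cite{dC1}) a piece of a Euclidean sphere or hyperplane; completeness and unboundedness then single out the generalized hyperspheres. Your approach is the intrinsic Riemannian one: the Codazzi equation in a space form forces the umbilicity factor to be constant, after which you invoke a local uniqueness theorem for umbilical hypersurfaces with given first--order data and extend by connectedness and completeness. The paper's method is shorter and sidesteps the identification step you flag as the main obstacle, at the price of importing the Euclidean result and the conformal invariance of umbilicity; your method is self--contained within $\mathbb H^n$ and transfers verbatim to any space form, but the uniqueness/rigidity step needs to be made precise (e.g.\ via the fundamental theorem of hypersurfaces or an ODE argument along geodesics of the hypersurface).
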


\begin{proof}
It is classical (cf. \cite{dC1} in case $n=3$) that any totally umbilical hypersurface of $\mathbb R^n$ is contained in a sphere or in a hyperplane. 

The half--space model of $\mathbb H^n$ is conformally equivalent to $\mathbb R^n$. Conformal diffeomorphisms preserve umbilicity hence all connected complete totally umbilical hypersurfaces in the half--space model are nonempty intersections of $\Pi^{n+}$ by a sphere or
a hyperplane.

Among them there are metric spheres which are bounded so any unbounded complete umbilical hypersurface is the cross--section of a sphere or hyperplane not disjoint with ideal boundary i.e. a generalized hypersphere.
\end{proof}

\begin{df}
A \emph{$\varphi$--hypercycle} is a geodesic line on a $\varphi$--hypersphere, $\varphi\in [0,\pi]$.
In the half--space a $\varphi$--hypercycle is a cross--section of a $\varphi$--hypersphere with a $2$--dimensional plane through its center or simply open ray making angle $\varphi$ with teh ideal boundary.
\end{df}

\begin{prz}
A generalized $\varphi$--hypercycle has constant geodesic curvature equal $|\cos\varphi|$ (cf. \cite{Cz1})
In $\Pi^{2,+}$ hypercycles (at the same time hyperspheres, $n=2$) are
\begin{enumerate}
\item geodesic ($\varphi =\frac{\pi}{2}$) of ideal ends $0$ and $\infty$ being positive imaginary half--axis $i\mathbb R_+$ para\-metrized by arc--length as $t\mapsto ie^t$.
\item $\varphi$--hypercycle $E_{\varphi} =e^{i\varphi} \mathbb R_+$, $\varphi\in \left(0,\frac{\pi}{2}\right)$ of ideal ends $0$ and $\infty$ is parametrized by arc--length as $t\mapsto e^{t\sin\varphi +i\varphi}$.
\item horospheres ($\varphi =\pi$) with  the ideal end $\infty$ have arc--length parametrizations $t\mapsto t+ia$ with $a>0$.
\end{enumerate}
\end{prz}

\section{Umbilical routes along geodesics}

The notion of umbilical route says how to change an umbilicity parameter (mean curvature which is equal to the eigevalue of the shape operator) to preserve nice location of a family of umbilical hypersurfaces and avoid intersections.
\begin{df}
Let $\gamma :\mathbb R\to \mathbb H^n$ be an arc--length parametrized curve. We say that a real function $h$ is an \emph{umbilical route} along $\gamma$ if the family $L_t$ of generalized hyperspheres
orthogonal to $\gamma$ and having mean curvature $h(t)$ at $\gamma (t)$ could be extended to a totally umbilical foliation of $\mathbb H^n$.
\end{df}

In codimension $1$ the real hyperbolic space is the only carrying nonotrivial umbilical routes. Other constant curvature space i.e. $\mathbb R^n$ and $\mathbb S^n$ due to their structures of totally umbilical complete hypersurfaces (full spheres or hyperplanes) have topological obstrucions for existence of totally umbilical foliations ---
interior of any spherical leaf cannot foliated. On the other hand, any two nonparallel hyperplanes in $\mathbb R^n$ intersect. Hence for any curve in $\mathbb S^n$ there are no umbilical routes while in $\mathbb R^n$ the only identically zero appears on straight lines.

In nonconstant curvature even very regular symmetric space like the complex hyperbolic space $\mathbb CH^n$ have no totally umbilical hypersurafces. 

\medskip

We start with a very simple case of umbilical route along a geodesic where the situation is clear and formulae are predictable.

\begin{tw}
\label{route_geod}
Let $\mathcal F$ be a transversely $C^0$ codimension $1$ totally umbilical foliation of $\mathbb H^n$ orthogonal to an  arc--length parametrized geodesic line $\gamma$. If for any $t\in\mathbb R$ the mean curvature of the leaf (taken with orientation opposite to $\gamma$) through $\gamma (t)$ is $h(t)$ then $|h|\le 1$ 
and there are $ t_-, t_+\in [-\infty,+\infty]$ such that
\begin{equation}\label{eq:h_geod}
\begin{array}{rl}
({\rm i})   & h|_{(-\infty ,t_-]}\equiv -1, \\
({\rm ii}) & {the\ function}\  (-{\rm ath} \circ h)|_{(t_- ,t_+)}\  {\rm is}\  1-{Lipschitz}, \\
({\rm iii})  & h|_{[t_+,+\infty )}\equiv 1.
\end{array}
\end{equation}
Conversely, if $h:\mathbb R\to [-1,1]$ is a continuous function satisfying {\rm (\ref{eq:h_geod})} then $h$ is an umbilical route along any geodesic line in $\mathbb H^n$. 
\end{tw}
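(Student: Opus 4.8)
The plan is to work entirely in the upper half-plane $\Pi^{2,+}$ and reduce the $n$-dimensional problem to a planar one. Since the foliation is orthogonal to $\gamma$ and the leaves are generalized hyperspheres (by Proposition \ref{umb_hyp}), each leaf meets $\gamma$ perpendicularly, and by rotational symmetry around $\gamma$ the whole configuration is determined by its trace in a $2$-plane containing $\gamma$. Using the homogeneity of geodesics (any geodesic is isometric to any other), I would normalize $\gamma$ to be the positive imaginary half-axis $t\mapsto ie^t$. A leaf $L_t$ orthogonal to $\gamma$ at $ie^t$ is then a generalized $\varphi$-hypersphere meeting the ideal boundary at some angle $\beta(t)$, and by Proposition \ref{curv} its mean curvature is $h(t)=-\cos\beta(t)$, which immediately forces $|h|\le 1$. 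The strategy is thus to translate ``$\mathcal F$ is a foliation'' into the statement that the leaves $L_t$ are pairwise disjoint and fill $\mathbb H^n$, and then express disjointness as an analytic condition on $h$.

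\textbf{Parametrizing the leaves.} First I would write down explicitly the generalized hypersphere $L_t$ perpendicular to $i\mathbb R_+$ at height $ie^t$ with mean curvature $h(t)$. A leaf crossing $i\mathbb R_+$ orthogonally at height $r=e^t$ and making exterior angle $\beta$ with the boundary is, in the half-space, a Euclidean circle (or line) through $ir$; a short computation gives its Euclidean center on the real axis and a radius determined by $r$ and $\beta$. The key invariant to extract is a ``signed position'' function that is monotone along $\gamma$ precisely when the leaves are nested without intersecting. The natural candidate, suggested by the form of the answer, is to show that the two ideal endpoints $p_-(t)<p_+(t)$ of $L_t$ on $\mathbb R\cup\{\infty\}$ satisfy explicit formulae in terms of $e^t$ and $h(t)$; disjointness of leaves is then equivalent to the nesting condition that the intervals $(p_-(t),p_+(t))$ (or their complements) are totally ordered by inclusion, i.e. both endpoints move monotonically.

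\textbf{From nesting to the Lipschitz condition.} Carrying out that computation, I expect the endpoints to take the form $p_\pm(t)=e^t\bigl(\mp\sqrt{1-h^2}\,\big/ (1\pm h)\bigr)$ or something equivalent, and the nesting requirement to reduce to monotonicity of a single function of $t$. Writing $h=-\cos\beta=-\tanh u$ for a suitable reparametrization $u$, the quantity $\log$ of an endpoint-ratio should come out as $t \pm (-\operatorname{ath}\circ h)(t)$ up to sign; requiring both of these to be nondecreasing is exactly the statement that $-\operatorname{ath}\circ h$ has derivative in $[-1,1]$, i.e. is $1$-Lipschitz, on the region where $|h|<1$. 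The degenerate values $h=\pm 1$ correspond to horocycle leaves (one endpoint escaping to a tangency with $\partial\mathbb H^n$), and the requirement that these fill out the two ``ends'' of the space without overlap forces the plateaus (i) and (iii): before $t_-$ the foliation must consist of the horocycles centered at one ideal end of $\gamma$ (giving $h\equiv -1$), after $t_+$ those centered at the other ($h\equiv 1$). This handles both directions simultaneously, since each step is an equivalence: a continuous $h$ satisfying (\ref{eq:h_geod}) yields endpoints that are monotone and exhaust $\mathbb R\cup\{\infty\}$, hence a genuine foliation.

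\textbf{Main obstacle.} The hardest part will be the matching at the transition times $t_\pm$ and the rigorous verification that the nested leaves actually \emph{foliate} all of $\mathbb H^n$ rather than merely being disjoint --- one must check there are no gaps and that the union is everything, using continuity and the limiting horocycle behavior as $h\to\pm 1$. A secondary technical point is justifying that a transversely $C^0$ umbilical foliation orthogonal to $\gamma$ really does have each leaf orthogonal to $\gamma$ and given by a generalized hypersphere with a well-defined mean curvature $h(t)$; this is where Proposition \ref{umb_hyp} and Proposition \ref{curv} are invoked, together with the observation that transversality plus orthogonality at one point propagates by the rotational symmetry. Once disjointness is correctly encoded as monotonicity of the endpoint functions, the equivalence with the $1$-Lipschitz condition on $-\operatorname{ath}\circ h$ is a direct calculation, so I would expect the gap-filling and endpoint analysis, not the Lipschitz bookkeeping, to carry the real weight of the argument.
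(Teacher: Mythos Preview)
Your plan is essentially the paper's proof: normalize $\gamma$ to the positive imaginary axis in $\Pi^{2,+}$, compute the ideal endpoints of each leaf (the paper packages this computation as Lemmas~\ref{circ_geod} and~\ref{circs_geod}), and read off the Lipschitz inequality from the nesting of these endpoints, with the horospherical plateaus and the converse handled just as you describe. Two small corrections worth noting: because every leaf is symmetric under reflection in the imaginary axis one has $p_-(t)=-p_+(t)$, so the two endpoint monotonicities collapse to a single inequality rather than the pair you anticipate; and before invoking Proposition~\ref{umb_hyp} the paper inserts a brief Euler-characteristic argument to rule out bounded (spherical) leaves, a step you should make explicit.
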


For the proof we need elementary lemmas.

\begin{lem}\label{circ_geod}
Let $s>0$, $\beta\in [0,\pi )$ and $\mathcal C$ be a circle of center $C\subset\mathbb C$ and radius $R$ orthogonal to the imaginary axis $i\mathbb R$ at the point $is$ and meeting the real axis $\mathbb R$ at angle $\beta$ (measured outside).

Then 
$$R=\frac{s}{1+\cos\beta}, \quad C=i\frac{s \cos\beta}{1+\cos\beta}$$
and the point(s) of the intersection $\mathcal C\cap \mathbb R$ are of the form 
$$a_{\pm}=\pm s\tan\frac{\beta}{2}.$$
\end{lem}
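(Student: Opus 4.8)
The plan is to reduce the three claims to plane Euclidean trigonometry, since they all concern a single circle $\mathcal{C}$ in $\mathbb{C}=\mathbb{R}^2$.

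First I would use the orthogonality to locate the center. A circle is orthogonal to a line precisely when the line passes through its center: at an intersection point the line runs along a radius and is therefore perpendicular to the tangent of the circle. Hence the imaginary axis passes through $C$, so $C=ic$ for some $c\in\mathbb{R}$, and the two points where $\mathcal{C}$ meets $i\mathbb{R}$ are $i(c+R)$ and $i(c-R)$; one of them is $is$.

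Next I would extract $c$ from the angle condition. Let $a\in\mathbb{R}$ be a point of $\mathcal{C}\cap\mathbb{R}$. The radius vector there is $a-ic$, of modulus $R$, and the tangent to $\mathcal{C}$ is orthogonal to it; by hypothesis this tangent meets $\mathbb{R}$ at angle $\beta$. Writing directions as unit complex numbers, the outward radius direction at the right--hand intersection equals $e^{i(\beta-\pi/2)}=\sin\beta-i\cos\beta$, so $\frac{a-ic}{R}=\sin\beta-i\cos\beta$. Comparing real and imaginary parts yields $c=R\cos\beta$ and $a=R\sin\beta$, while the left--hand intersection gives the opposite sign, so $a_{\pm}=\pm R\sin\beta$. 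I expect this to be the main obstacle: fixing the sign of $c$ (equivalently, reading ``angle measured outside'' so that $c=R\cos\beta$ rather than $c=-R\cos\beta$) demands care with the convention. Two limiting checks reassure me that the relation is correctly oriented: $\beta=\frac{\pi}{2}$ forces $c=0$, a semicircle centered on $\mathbb{R}$, i.e. a geodesic, and $\beta=0$ forces $c=R$, a circle tangent to $\mathbb{R}$, i.e. a horosphere.

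Finally I would identify $is$ and solve. Since $c=R\cos\beta$ and $\beta\in[0,\pi)$, we get $c-R=R(\cos\beta-1)\le 0$, so $i(c-R)$ lies in the closed lower half--plane and the only intersection of $\mathcal{C}$ with the positive imaginary axis is $i(c+R)$; therefore $s=c+R=R(1+\cos\beta)$. Solving gives $R=\frac{s}{1+\cos\beta}$ and $C=ic=i\frac{s\cos\beta}{1+\cos\beta}$, and then $a_{\pm}=\pm R\sin\beta=\pm\frac{s\sin\beta}{1+\cos\beta}=\pm s\tan\frac{\beta}{2}$ by the half--angle identity $\tan\frac{\beta}{2}=\frac{\sin\beta}{1+\cos\beta}$. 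This completes the plan.
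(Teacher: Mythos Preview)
Your proposal is correct and follows essentially the same route as the paper: both arguments place the center on $i\mathbb{R}$ by orthogonality, then extract $c=R\cos\beta$ and $a=\pm R\sin\beta$ from the angle condition and solve $s=c+R$. The only cosmetic difference is that the paper reads off these relations from the right triangle $0aC$ (noting $\sphericalangle 0aC=\tfrac{\pi}{2}-\beta$), whereas you encode the outward radius direction as a unit complex number; the trigonometric content is identical, and your sanity checks at $\beta=0$ and $\beta=\tfrac{\pi}{2}$ nicely pin down the sign convention that the paper handles by assuming $\beta$ acute.
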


\begin{figure}[h]
\centerline{
\includegraphics[scale=1]{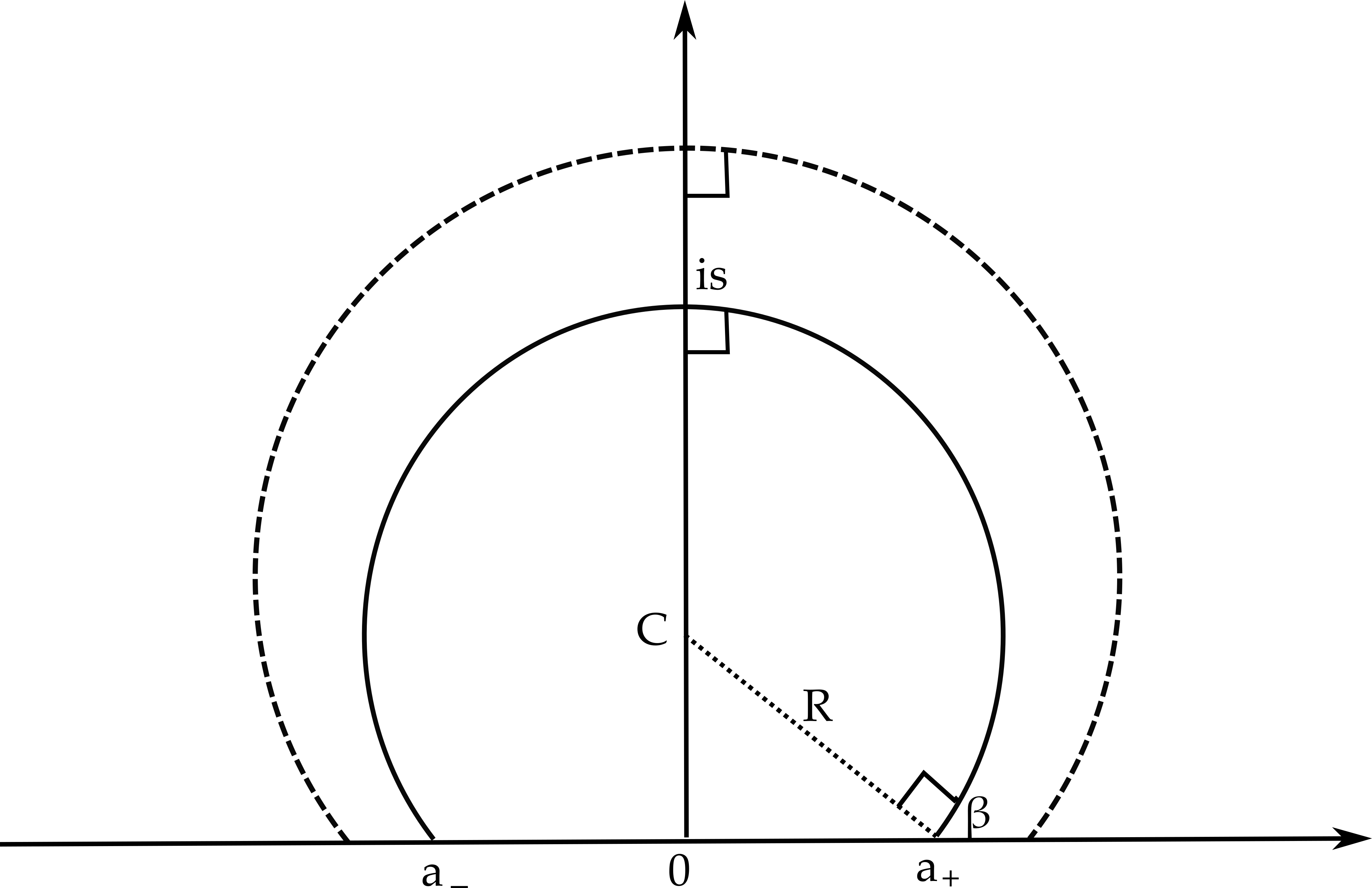}
}
\caption{Hypercycles orthogonal to a geodesic}
\label{fig:fig1}
\end{figure}

\begin{proof}
Since $\mathcal C$ is orthogonal to $i\mathbb R$ its center $C\in i\mathbb R$ and $\Im C=s-R$. At a point
$a\in \mathcal C \cap \mathbb R$ radius is perpendicular to the tangent. If $\beta$ is acute (other cases are similar) then 
$\sphericalangle 0aC =\frac{\pi}{2}-\beta$ and 
$$\frac{s-R}{R}=\sin \left(\frac{\pi}{2}-\beta \right) \ {\rm which\ implies} \ R=\frac{s}{1+\cos\beta}.$$
Thus we have $C$ and
$$a=\pm R\sin\beta =\pm s\frac{\sin\beta}{1+\cos\beta}=\pm s\tan\frac{\beta}{2}.$$
\end{proof}

\begin{lem}\label{circs_geod}
Let $0<s_1<s_2$, $\beta_1, \beta_2\in [0,\pi )$ and $\mathcal C_1, \mathcal C_2\subset\mathbb C$ be circles  orthogonal to the imaginary axis $i\mathbb R$ at points $is_1$, $is_2$ and meeting the real axis $\mathbb R$ at angles $\beta_1$, $\beta_2$, respectively.

Then $\mathcal C_1$ and $\mathcal C_2$ do not intersect in the upper half--plane $\Pi^{2,+}$ iff
$$ \frac{\cot\frac{\beta_2}{2}}{\cot\frac{\beta_1}{2}}\le \frac{s_2}{s_1}$$
provided that $\beta_1, \beta_2\in (0,\pi )$ or $\beta_1=\beta_2=0$.
\end{lem}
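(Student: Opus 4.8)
The plan is to translate ``non-intersection in $\Pi^{2,+}$'' into the sign of the power of points of $\mathcal C_1$ with respect to $\mathcal C_2$, exploiting that both circles are centred on the imaginary axis. First I would record, from Lemma \ref{circ_geod}, that $\mathcal C_i$ has centre $ic_i$ with $c_i=\frac{s_i\cos\beta_i}{1+\cos\beta_i}$, radius $R_i=\frac{s_i}{1+\cos\beta_i}$, highest point $is_i$ (so that $c_i+R_i=s_i$), and real--axis crossings $\pm a_i$ with $a_i=s_i\tan\frac{\beta_i}{2}$; in particular $R_i^2-c_i^2=a_i^2$ and $c_i-R_i=-s_i\tan^2\frac{\beta_i}{2}$. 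Since both centres lie on $i\mathbb R$, each $\mathcal C_i$ is invariant under the reflection $z\mapsto-\bar z$, so $\alpha_1:=\mathcal C_1\cap\overline{\Pi^{2,+}}$ is a connected arc running from $(-a_1,0)$ through $is_1$ to $(a_1,0)$, and I must show that $\alpha_1$ avoids $\mathcal C_2$ inside the open half--plane exactly in the stated range.

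Next I would introduce the power function $f(P)=|P-ic_2|^2-R_2^2$ of $\mathcal C_2$ and restrict it to $\mathcal C_1$. Writing $P=(x,y)\in\mathcal C_1$ and substituting $x^2=R_1^2-(y-c_1)^2$ gives
\begin{equation*}
f=R_1^2-R_2^2+(c_1-c_2)(2y-c_1-c_2),
\end{equation*}
which is \emph{affine in the height $y$}. On $\alpha_1$ the height $y$ sweeps the whole interval $[0,s_1]$, reaching $y=0$ only at the two foot points $(\pm a_1,0)$ and $y=s_1$ only at the apex $is_1$. Being affine, $f$ therefore attains its extreme values on $\alpha_1$ at these two heights, and $\alpha_1$ meets $\mathcal C_2$ precisely where $f=0$.

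The two boundary values are decisive. At the apex, $f(is_1)=(s_1-c_2)^2-R_2^2=(s_1-s_2)\bigl(s_1+s_2\tan^2\tfrac{\beta_2}{2}\bigr)$, which is strictly negative since $s_1<s_2$: the apex of the lower circle always lies inside $\mathcal C_2$. At the feet, $f(\pm a_1,0)=a_1^2+c_2^2-R_2^2=a_1^2-a_2^2$. Because $f$ is affine in $y$ and negative at $y=s_1$, it is negative on all of the open arc exactly when $f(\pm a_1,0)\le 0$, i.e. $a_1\le a_2$ (if $a_1=a_2$ the only zero is at the feet, which lie on the ideal boundary); and if $a_1>a_2$ the intermediate value theorem produces a zero at some height $y\in(0,s_1)$, a genuine intersection point in $\Pi^{2,+}$. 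Hence $\mathcal C_1$ and $\mathcal C_2$ are disjoint in $\Pi^{2,+}$ iff $a_1\le a_2$, that is $s_1\tan\frac{\beta_1}{2}\le s_2\tan\frac{\beta_2}{2}$, equivalently $\frac{\cot(\beta_2/2)}{\cot(\beta_1/2)}=\frac{\tan(\beta_1/2)}{\tan(\beta_2/2)}\le\frac{s_2}{s_1}$ when $\beta_1,\beta_2\in(0,\pi)$; the common value $a_1=a_2=0$ accounts for the horocyclic case $\beta_1=\beta_2=0$.

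The main obstacle I anticipate is conceptual rather than computational: one must resist equating ``the two circles meet in $\Pi^{2,+}$'' with ``the two circles meet'', since they may cross below or on the real axis, or be nested one inside the other. The affine--in--$y$ reduction is exactly what bypasses a tedious case split (external tangency, internal nesting, the height of the radical axis, the sign of $c_2-c_1$), collapsing everything to the signs of $f$ at the apex and at the feet. I would finally double--check the boundary behaviour --- that $y=s_1$ can never itself be a crossing height (it would force $\mathcal C_2$ through $is_1$, impossible for $s_1\ne s_2$) and that the excluded endpoints $(\pm a_1,0)$ sit on the ideal boundary --- so that the equality case $a_1=a_2$ genuinely yields disjointness inside the open half--plane.
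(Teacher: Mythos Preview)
Your proof is correct. Both you and the paper reduce the question to the inequality $a_1\le a_2$ between the (positive) real--axis crossings and then invoke Lemma~\ref{circ_geod} to rewrite this as the stated cotangent inequality. The difference lies in how that reduction is justified: the paper simply asserts in one sentence that $\mathcal C_1\cap\mathcal C_2\cap\Pi^{2,+}=\emptyset$ occurs exactly when the feet of $\mathcal C_1$ lie between the feet of $\mathcal C_2$, relying on the reader's picture of two circles symmetric about $i\mathbb R$. You instead supply an analytic argument, restricting the power function of $\mathcal C_2$ to $\mathcal C_1$, observing that it becomes affine in the height $y$, and reading off the sign from its values at the apex $is_1$ and at the feet $(\pm a_1,0)$. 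Your route is longer but fully self--contained and neatly sidesteps the case analysis (internal nesting, tangency, position of the radical axis, sign of $c_2-c_1$) that a direct geometric verification would otherwise require; the paper's route is quicker but leaves that verification implicit.
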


\begin{proof}
For a given $\mathcal C_1$ which intersects $\mathbb R$ transversally ($\beta_1 >0$) the only situation of
$\mathcal C_1\cap\mathcal C_2\cap \Pi^{2,+}=\emptyset$ is that $a_{2-}\le a_{1-}$ and $a_{1+}\le a_{2+}$.
Hence we obtain the inequality by Lemma \ref{circ_geod}. 
\end{proof}

\begin{uw}\label{horocycle}
A generalized circle orthogonal to the imaginary axis is a horizontal line which corresponds to angle $\beta=\pi$.
It has infinite radius and does not intersect the real axis.

If as in Lemma \ref{circs_geod} $\mathcal C_1$ is a horizontal line then $\mathcal C_2$ must be too. If $\mathcal C_2$ is horizontal then it is disjont with any $\mathcal C_1$.
\end{uw}

\begin{wn} 
If $h=-\cos\beta$ with $\beta \in \left( 0,\frac{\pi}{2}\right]$ is constant mean curvature of a hypersphere then its (constant) distance $\delta$ from the corresponding totally geodesic hypersurface satisfies $\cos\beta =\tanh \delta$. 
\end{wn}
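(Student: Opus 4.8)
The plan is to reduce to an explicit two--dimensional model and then read the distance directly off the distance formula. By Proposition \ref{curv} a hypersphere with constant mean curvature $h=-\cos\beta$ is exactly a generalized hypersphere meeting the ideal boundary at angle $\beta$, and by homogeneity of $\mathbb H^n$ (any two totally geodesic hypersurfaces are isometric, and the configuration is rotationally symmetric about the common axis) it suffices to treat one convenient pair. First I would place the totally geodesic hypersurface so that its trace in a normal $2$--plane is the geodesic $g=i\mathbb R_+\subset\Pi^{2,+}$, and the corresponding hypersphere so that its trace is the hypercycle $E_\varphi=e^{i\varphi}\mathbb R_+$ from the Example, with $\varphi=\beta$. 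The two angles have equal cosine because in dimension $2$ the geodesic curvature $|\cos\varphi|$ of $E_\varphi$ coincides with $|h|=|\cos\beta|$, and both lie in $\left(0,\tfrac{\pi}{2}\right]$. Both curves share the ideal endpoints $0$ and $\infty$.

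Next I would compute the distance. The geodesic through the point $e^{i\varphi}\in E_\varphi$ meeting $g$ orthogonally is the Euclidean semicircle $|z|=1$, whose foot on $g$ is $i$. Applying $d(z,w)=2\,{\rm ath}\left|\frac{z-w}{z-\bar w}\right|$ with $z=e^{i\varphi}$ and $w=i$ gives
\[
\left|\frac{e^{i\varphi}-i}{e^{i\varphi}+i}\right|^2=\frac{1-\sin\varphi}{1+\sin\varphi},
\qquad\text{so}\qquad
\delta=2\,{\rm ath}\sqrt{\frac{1-\sin\varphi}{1+\sin\varphi}}.
\]
The dilation $z\mapsto\lambda z$ with $\lambda>0$ is an isometry fixing both $g$ and $E_\varphi$ and acting transitively on $E_\varphi$, so this $\delta$ does not depend on the chosen point; this both shows that the distance is constant and confirms that $E_\varphi$ is the equidistant hypersurface.

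Finally I would simplify. From the displayed formula $\tanh\tfrac{\delta}{2}=\sqrt{\tfrac{1-\sin\varphi}{1+\sin\varphi}}$, hence $\tanh^2\tfrac{\delta}{2}=\tfrac{1-\sin\varphi}{1+\sin\varphi}$; solving for $\sin\varphi$ and using the identity $\frac{1-\tanh^2 x}{1+\tanh^2 x}=\frac{1}{\cosh 2x}$ yields $\sin\varphi=\frac{1}{\cosh\delta}$, and therefore $\cos\varphi=\tanh\delta$ for $\delta\ge 0$. Since $\beta=\varphi$, this is the claimed identity $\cos\beta=\tanh\delta$. The boundary values $\beta=\tfrac{\pi}{2}\Rightarrow\delta=0$ (the hypercycle degenerates to the geodesic) and $\beta\to 0\Rightarrow\delta\to\infty$ (the horocyclic limit) serve as sanity checks.

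The step I expect to be the only real friction is the bookkeeping in the first paragraph: verifying that the angle $\beta$ ``measured outside'' in Proposition \ref{curv} matches the angle $\varphi$ in the parametrization of $E_\varphi$, and that the reduction to this single planar configuration is legitimate by the stated symmetries. Once the model is fixed, the distance computation and the hyperbolic--trigonometric simplification are entirely routine.
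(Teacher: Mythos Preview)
Your argument is correct. Both you and the paper reduce to a planar picture, but you choose different normalizations: the paper places the \emph{common perpendicular} on the imaginary axis, so that the hypersphere is one of the circles from Lemma~\ref{circ_geod} (orthogonal to $i\mathbb R$ at $is$, meeting $\mathbb R$ at $\pm s\tan\frac{\beta}{2}$) and the associated totally geodesic hypersurface is the semicircle with those same ideal endpoints, hitting $i\mathbb R$ at $is\tan\frac{\beta}{2}$; the distance is then read off along $i\mathbb R$ as $\delta=\ln\cot\frac{\beta}{2}$ and simplified via half--angle identities to ${\rm ath}(\cos\beta)$. You instead place the \emph{ideal endpoints} at $0$ and $\infty$, taking the pair $(i\mathbb R_+,E_\beta)$, and compute the foot--of--perpendicular distance with the explicit $\Pi^{2,+}$ formula, then finish with hyperbolic double--angle identities. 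The paper's route is shorter because it recycles Lemma~\ref{circ_geod} and the simple formula $d(ai,bi)=|\ln\frac{a}{b}|$; yours is more self--contained and makes the constancy of $\delta$ transparent via the dilation argument. Your caution about matching the ``outside'' angle $\beta$ with the ray angle $\varphi$ is well placed but harmless here: for a straight ray $E_\varphi$ the Euclidean angle with $\mathbb R$ is unambiguously $\varphi$, and Proposition~\ref{curv} gives $|h|=|\cos\varphi|$, so $\varphi=\beta$ on $\left(0,\tfrac{\pi}{2}\right]$ is immediate.
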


\begin{proof}
After conformal transformation we have by Lemma \ref{circs_geod} and definition of hyperbolic distance in $\Pi^{n,+}$  that
$e^\delta=\cot\frac{\beta}{2}$. Hence
$$\delta =\ln \cot \frac{\beta}{2}=\ln \frac{1+\cos\beta}{\sin\beta}=\ln\frac{1+\cos\beta}{\sqrt{1-\cos^2\beta}}=
{\rm ath} (\cos\beta).$$

In \cite{G} we could find this formula in the equivalent form $\cot\beta =\sinh \delta$.
\end{proof}

Now we are prepared for

\begin{proof} \emph{(of Theorem \ref{route_geod})}
A geodesic sphere in $\mathbb H^n$ cannot serve as a leaf of a codimension $1$  foliation because its interior has nonzero Euler characteristic and cannot be foliated in a tangently to the boundary. Thus the only possible leaves of totally umbilical foliations on $\mathbb H^n$ are generalized hyperspheres and in fact $|h|\le 1$. 

To prove (i) observe theat if some leave $L_{\gamma (t)}$ is a horosphere "centered" at the begin $\gamma (-\infty)$ then (cf. Remark \ref{horocycle}) all preceding leaves must be horospheres of the same "centre".  This argument works in proof of (iii) as well.

We use a conformal transformation of $\Pi^{n,+}$ which is then hyperbolic isometry to put the geodesic $\gamma$ as the $n$--th half--axis $A_{n,+}$ oriented "up". Any generalized sphere representing a generalized hypersphere orthogonal to $\gamma$ has a center on the $A_{n,+}$.

Consider section of $\Pi^{n,+}$ by any $2$--dimensional plane $P$ containing the $n$--th axis and orthogonal to the ideal boundary.  Then $P\cap \Pi^{n,+}$ is isometric to $\Pi^{2,+}$ and $\mathcal F \cap P$
is generalized hypercycle foliation orthogonal to $\gamma =P\cap A_{n,+}$.

In $\Pi^{2,+}$ we paramtrize the geodesic by arc--length $\gamma (t)=e^t$. Fix $t_1$ and use criterion
from Lemma \ref{circs_geod} to avoid leaves intersecting $\gamma$ over $\gamma (t_1)$ to intersect
the leaf $L_{\gamma (t_1)}$. According to Proposition \ref{curv} the mean curvature of any $L_{\gamma (t)}$ equals $-\cos \beta (t)$ which allow to write for any $t_2>t_1$
$$\frac{e^{t_2}}{e^{t_1}}\ge \frac{\cot\frac{\beta(t_2)}{2}}{\cot\frac{\beta(t_1)}{2}}=
\frac{\ \frac{1-h(t_2)}{\sqrt{1-(h(t_2))^2}}\ }{\frac{1-h(t_2)}{\sqrt{1-(h(t_2))^2}}}
=\frac{\left(\sqrt{\frac{1+h(t_2)}{1-h(t_2)}}\right)^{-1}}{\left(\sqrt{\frac{1+h(t_1)}{1-h(t_1)}}\right)^{-1}}
=\frac{e^{-{\rm ath}(h(t_2))}}{e^{-{\rm ath}(h(t_1))}}$$
which is exactly (ii).

Now assume that $h$ is continuous, bounded by $1$ and satisfy (\ref{eq:h_geod}). Conditions (i) and (iii) imply
proper foliation inside last of initial horosphere and first of finishing one. From (ii) we know that generalized hypercycles of given mean curvature are pairwise disjont. Completeness of the foliation comes from continuity of the family in the half--space and the ideal boundary followed by continuity of  $h$ and Lemma \ref{circ_geod}.  
\end{proof}

If we assume that a foliation is transversely differentiable then the condition on umbilical route is even simpler.

\begin{tw}
\label{route_geod_c1}
For function $h$ of mean curvature of leaves of a transversely $C^1$ totally umbilical codimension $1$ foliation of $\mathbb H^n$ along arc--length parametrized geodesic (transversal orienation opposite to the geodesic) there are  $ t_-, t_+\in [-\infty,+\infty]$ such that
\begin{equation}\label{eq:h_c1_geod}
\begin{array}{rl}
({\rm i})   & h|_{(-\infty ,t_-]}\equiv -1, \\
({\rm ii}) & h'\ge h^2-1, \\
({\rm iii})  & h|_{[t_+,+\infty )}\equiv 1.
\end{array}
\end{equation}

Conversely,  if $h:\mathbb R\to [-1,1]$ is a $C^1$--function satisfying 
{\rm (\ref{eq:h_c1_geod})} 
then $h$ is an umibilical route along any geodesic line in $\mathbb H^n$.
\end{tw}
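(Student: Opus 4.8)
The plan is to deduce everything from Theorem \ref{route_geod} by rewriting its condition (ii) as a pointwise differential inequality once the extra $C^1$ hypothesis is in force. Set $g=-{\rm ath}\circ h$. On the open interval $(t_-,t_+)$ one has $|h|<1$, so $g$ is $C^1$ there and, since $\frac{d}{dt}{\rm ath}(h)=\frac{h'}{1-h^2}$, we get $g'(t)=-\dfrac{h'(t)}{1-h(t)^2}$. Because $1-h^2>0$ on this interval, the bound $g'\le 1$ is equivalent to $-h'\le 1-h^2$, i.e. to $h'\ge h^2-1$. Thus condition (ii) of the present theorem is exactly the infinitesimal form of the estimate $g'\le 1$, and this single computation is the crux of the translation.

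The point I would stress is that only a \emph{one-sided} estimate on $g$ enters Theorem \ref{route_geod}. Indeed the disjointness criterion behind it comes from Lemma \ref{circs_geod} through the identity $\cot\frac{\beta}{2}=e^{-{\rm ath}(h)}$, and it reads simply $g(t_2)-g(t_1)\le t_2-t_1$ for all $t_1<t_2$ (higher leaves lie over lower ones without meeting), while completeness of the foliation is obtained from continuity of the family together with the horospherical ends. Hence no reverse bound on $h'$ is needed, which is precisely why the $C^1$ condition is the single inequality $h'\ge h^2-1$ rather than a two-sided one. For the forward implication this gives the result immediately: a transversely $C^1$ foliation is in particular transversely $C^0$, so Theorem \ref{route_geod} already supplies $|h|\le1$, (i), (iii) and the Lipschitz estimate on $g$; differentiating that estimate (legitimate since $h\in C^1$ with $|h|<1$ on $(t_-,t_+)$) yields $g'\le 1$, i.e. (ii).

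For the converse I would start from a $C^1$ map $h:\mathbb R\to[-1,1]$ satisfying (i)--(iii), taking $t_-,t_+$ to be the last, respectively first, instants at which $h=\mp1$, so that $|h|<1$ on $(t_-,t_+)$. Integrating (ii) then gives $g(t_2)-g(t_1)=\int_{t_1}^{t_2}g'\le t_2-t_1$ for $t_1<t_2$ in that interval, which is exactly the disjointness inequality above; combined with the horospherical behaviour forced by (i) and (iii) (cf. Remark \ref{horocycle}), this verifies the hypotheses used in the converse half of Theorem \ref{route_geod}, and so $h$ is an umbilical route. The main obstacle is the behaviour at the transition points $t_\pm$, where $g=-{\rm ath}\circ h\to\mp\infty$ and so (ii) is available only on the open interval: one must check that the monotone quantity $g(t)-t$ glues continuously with the horospherical ends, so that disjointness persists across $t_\pm$ and the leaves fill $\mathbb H^n$ without gaps. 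The $C^1$ matching itself is consistent — at $t_\pm$ one has $h'=0$ and $h^2-1=0$, so (ii) holds there with equality — but upgrading this local consistency to the global non-intersection and completeness statement is the delicate step, and it is exactly what the continuity argument in the proof of Theorem \ref{route_geod} provides.
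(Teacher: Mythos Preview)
Your proposal is correct and follows essentially the same approach as the paper: both reduce to Theorem \ref{route_geod} and pass between the Lipschitz condition on $-{\rm ath}\circ h$ and the pointwise inequality $h'\ge h^2-1$ via $g'=-h'/(1-h^2)$. Your treatment is considerably more careful than the paper's terse ``differentiate'' --- in particular you spell out the one-sided nature of the Lipschitz estimate, the converse by integration, and the matching at $t_\pm$ --- but the underlying argument is the same.
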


\begin{proof}
It is enough to differentiate (\ref{eq:h_geod}):
$$-\frac{{\rm ath}( h(t+\varepsilon))-{\rm ath}( h(t))}{\varepsilon}\le 1$$ to obtain
$$\frac{h'}{1-h^2}\ge -1.$$
\end{proof}

\begin{uw}
The condition (\ref{eq:h_c1_geod})(ii) on the derivative of $h$ could be formulated in terms of the angle of intersection as
$$\beta ' \ge -\sin\beta.$$
\end{uw}

\begin{prz}
\begin{enumerate}
\item Totally geodesic foliation $h\equiv 0$ represented by concentric spheres.
\item Horospherical foliation $h\equiv -1$ represented by spheres tangent at one point.
\item Pencil foliation $h=-\tanh$ which is ultimate for estimation (\ref{eq:h_c1_geod}) is represented
by spheres having an $(n-2)$--dimensional sphere $\subset \mathbb R^{n-1}\times \{ 0\}$ in common. In dimension $2$ it looks like "rising sun". 

\begin{figure}[h]
\centerline{
\includegraphics[scale=0.7]{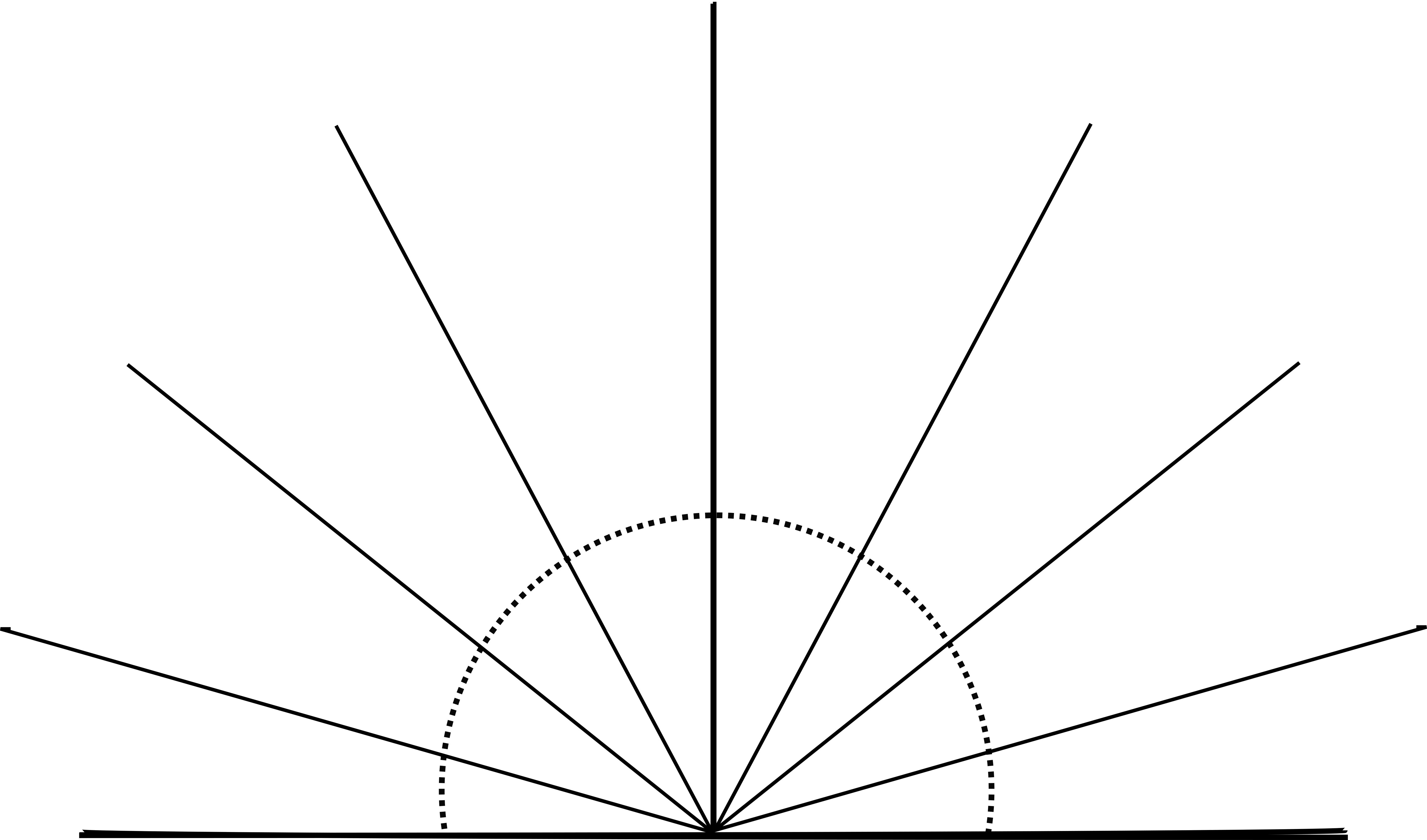}
}
\caption{"Rising sun" foliation in $\mathbb H^2$ of common ends $0$ and $\infty$}
\label{fig:fig2}
\end{figure}

\item For a given (even non--discrete) family  of generalized hypercycles pairwise disjont and orthogonal to a given geodesic we could define a totally umbilical foliation of whole $\mathbb H^n$ with such leaves. 
\end{enumerate}
\end{prz}

\section{Umbilical routes along hypercycles}
Here we change geodesic curvature of transversal an look for a condition for a curve of constant nonzero curvature. Even in this situation formulae looks something strange and do not promise reasonable generalization.

\begin{tw}
\label{route_hyperc}
Let $0<\varphi <\frac{\pi}{2}$.
Assume that $\mathcal F$ is a transversely $C^0$ codimension $1$ totally umbilical foliation of $\mathbb H^n$ orthogonal to an  arc--length parametrized $\varphi$--hypercycle $\gamma$. If for any $t\in\mathbb R$ the mean curvature of the leaf (taken with orientation opposite to $\gamma$) through $\gamma (t)$ is $h(t)$ then $|h|\le \sin \varphi$ 
and there are $ t_-, t_+\in [-\infty,+\infty]$ such that
\begin{equation}\label{eq:h_hyperc}
\begin{array}{rl}
({\rm i})   & h|_{(-\infty ,t_-]}\equiv -\sin\varphi, \\
({\rm ii})  & 
 t\mapsto \ln \dfrac{\sin\varphi -h(t)}{h(t)\cos\varphi+\sqrt{1-(h(t))^2}\,\sin\varphi} \\ 
& {is \ a\  (\sin\varphi)\! -\! {Lipschitz}\ function\ on\ (t_-,t_+)}\\  
\\
({\rm iii})  & h|_{[t_+,+\infty )}\equiv \sin\varphi.
\end{array}
\end{equation}

Conversely, if $h:\mathbb R\to [-\sin \varphi, \sin \varphi]$ is a continuous function satisfying {\rm (\ref{eq:h_hyperc})} then $h$ is an umbilical route along any $\varphi$--hypercycle in $\mathbb H^n$. 
\end{tw}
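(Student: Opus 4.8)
The plan is to transport the entire argument of Theorem \ref{route_geod} to the hypercycle, carrying the extra factor $\sin\varphi$ through every step. First I would use an isometry to place $\gamma$ as the standard arc--length parametrized $\varphi$--hypercycle $\gamma(t)=e^{t\sin\varphi+i\varphi}$ lying on the ray $E_\varphi$, and intersect with a $2$--plane through the axis of the configuration to reduce to $\Pi^{2,+}$, where $\mathcal F$ restricts to a family of circles and lines orthogonal to $E_\varphi$; the Euler characteristic argument of Theorem \ref{route_geod} again excludes metric spheres, so each leaf is a generalized hypersphere with $h=-\cos\beta$ by Proposition \ref{curv}. Orthogonality to $E_\varphi$ forces the Euclidean centre of a leaf through $\gamma(t)=\rho e^{i\varphi}$, $\rho=e^{t\sin\varphi}$, onto the line $\mathbb R e^{i\varphi}$, say at $ce^{i\varphi}$; such a circle meets that line a second time at $(2c-\rho)e^{i\varphi}$, and since $\gamma$ must cross each leaf only once this second point may not lie on the ray, giving $c\le\rho/2$. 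A short computation turns $c\le\rho/2$ into $|\cos\beta|\le\sin\varphi$, i.e. $|h|\le\sin\varphi$, with the two extremes $c=\rho/2$ (the leaf through the ideal end $0$, where $h=-\sin\varphi$) and $c\to-\infty$ (the straight leaf through the ideal end $\infty$, where $h=+\sin\varphi$).

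Next I would prove two elementary lemmas in the spirit of Lemma \ref{circ_geod} and Lemma \ref{circs_geod}. The first records that the leaf above has centre $ce^{i\varphi}$ with $c=\rho\cos\beta/(\cos\beta+\sin\varphi)$, radius $\rho\sin\varphi/(\cos\beta+\sin\varphi)$, and meets the real axis at $a_\pm=c\cos\varphi\pm\sqrt{c^2\cos^2\varphi-2c\rho+\rho^2}$, where now $a_-\le 0\le a_+$ are no longer symmetric. The second translates non--intersection in $\Pi^{2,+}$ of the leaves at $t_1<t_2$ into the nesting of these unsymmetric chords, as the ordering of the ideal ends $0,\infty$ of the common hypercycle dictates. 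The heart of the matter is then purely computational: substituting $c$, using the identities $h\cos\varphi+\sqrt{1-h^2}\,\sin\varphi=-\cos(\beta+\varphi)$ and $\sin\varphi-h=\sin\varphi+\cos\beta$, and replacing $\rho_i=e^{t_i\sin\varphi}$, the non--intersection inequality should collapse after taking logarithms to the one--sided bound $G(t_2)-G(t_1)\le(t_2-t_1)\sin\varphi$ for the function
$$G(t)=\ln\frac{\sin\varphi-h(t)}{h(t)\cos\varphi+\sqrt{1-(h(t))^2}\,\sin\varphi}.$$
Because arc--length on a $\varphi$--hypercycle gives $\ln\rho=t\sin\varphi$ rather than $\ln\rho=t$, the slope $\sin\varphi$ appears on the right; completeness of the foliation supplies the opposite inequality, and together they give that $G$ is $(\sin\varphi)$--Lipschitz, which is precisely (\ref{eq:h_hyperc})(ii).

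Conditions (i) and (iii) would then follow from rigidity at the two ideal ends, in direct analogy with the horosphere argument of Theorem \ref{route_geod}: once a leaf passes through the ideal end $0$ (so $h=-\sin\varphi$), every preceding leaf must pass through $0$ as well in order to stay disjoint and orthogonal, forcing $h\equiv-\sin\varphi$ on $(-\infty,t_-]$; symmetrically the straight leaves through $\infty$ force $h\equiv\sin\varphi$ on $[t_+,+\infty)$. Note that $G\to+\infty$ as $h\to-\sin\varphi$ and $G\to-\infty$ as $h\to\sin\varphi$, which is exactly why the Lipschitz clause is confined to the open interval $(t_-,t_+)$ on which $|h|<\sin\varphi$. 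For the converse I would reverse the construction: (ii) makes the middle leaves pairwise disjoint, (i) and (iii) fill the two tails by the single limiting families through $0$ and through $\infty$, and continuity of $h$ together with the explicit formulae of the first lemma assembles a transversely $C^0$ totally umbilical foliation of all of $\mathbb H^n$.

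The step I expect to be the main obstacle is the middle computation. Unlike the geodesic case the chords $a_\pm$ are not symmetric about the transversal, so I must first justify carefully that non--intersection of the two upper arcs is still equivalent to a single monotonicity of the endpoints (using that every leaf is orthogonal to the common hypercycle, whose ends $0,\infty$ linearly order the chords), and then push the trigonometric algebra until it yields precisely the logarithmic expression in (ii). The collapse to $-\cos(\beta+\varphi)$ in the denominator, and the verification that $G$ blows up to $\pm\infty$ exactly at $h=\mp\sin\varphi$, are the delicate points where a sign or half--angle slip would break the clean split into the Lipschitz interval and the two constant tails.
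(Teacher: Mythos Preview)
Your plan follows the paper's proof almost exactly: the same reduction to the $2$--plane containing $E_\varphi$, the same explicit lemmas computing centre, radius and real endpoints of a circle orthogonal to $E_\varphi$ (this is Lemma~\ref{circ_hyperc}) together with the non--intersection criterion via the left endpoint (Lemma~\ref{circs_hyperc}), the same rigidity argument at the extreme values for (i) and (iii) (Remark~\ref{plane_hyperc}), and the same substitution $h=-\cos\beta$, $\rho=e^{t\sin\varphi}$ collapsing the inequality to the logarithmic form (ii). Your identities $\sin\varphi-h=\sin\varphi+\cos\beta$ and $h\cos\varphi+\sqrt{1-h^2}\,\sin\varphi=-\cos(\beta+\varphi)$ are exactly the ones the paper uses.

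There is, however, one step where the hypercycle case genuinely departs from the geodesic case, and your proposal glosses over it. In the converse you assert that the data ``assembles a transversely $C^0$ totally umbilical foliation of all of $\mathbb H^n$''. The paper explicitly warns that this is \emph{not} automatic: the family of hyperspheres orthogonal to a single hypercycle need not fill $\mathbb H^n$ (the rotational symmetry about the transversal that makes this work in the geodesic case is absent here, since the centres sit on a line oblique to the ideal boundary). The constructed family has limiting umbilical hypersurfaces on each side, and the complementary region must be foliated separately --- the paper does this by continuing with leaves of the same mean curvature beyond the limits. Without this extension step the converse is incomplete. A smaller remark: non--intersection yields only the one--sided bound $G(t_2)-G(t_1)\le(t_2-t_1)\sin\varphi$ for $t_1<t_2$, and that is all the paper proves or needs (compare the one--sided inequality on $h'$ in Theorem~\ref{route_hyperc_c1}); your appeal to ``completeness of the foliation'' for the opposite inequality is neither in the paper nor required.
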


We shall modify lemmas \ref{circ_geod} and \ref{circs_geod}.
 Denote by $E_{\varphi}$ the open ray $e^{i\varphi}\mathbb R_+ \subset\mathbb C$.

\begin{lem}\label{circ_hyperc}
Let $\varphi \in \left(0,\frac{\pi}{2} \right)$, $s>0$, $\beta\in [0,\pi )$, and $\beta \ne \frac{\pi}{2}+\varphi$. Assume that $\mathcal C\subset\mathbb C$ is a circle of center $C$ and radius $R$ orthogonal to $E_{\varphi}$ at the unique point $se^{i\varphi}$ and meeting the real axis $\mathbb R$ at angle $\beta$.

Then $\beta \in \left[ \frac{\pi}{2}-\varphi , \frac{\pi}{2}+\varphi \right)$, 
$$R=\frac{s\sin\varphi}{\sin\varphi+\cos\beta}, \quad 
C=\frac{s\cos\varphi\cos\beta}{\sin\varphi+\cos\beta}+i\frac{s\sin\varphi \cos \beta}{\sin\varphi+\cos\beta}$$
and the point(s) of the intersection $\mathcal C\cap \mathbb R$ are of the form 
$$a_{\mp}=\frac{s\cos (\varphi \pm \beta)}{\sin\varphi+\cos\beta}.$$
\end{lem}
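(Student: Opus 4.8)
The plan is to follow the proof of Lemma \ref{circ_geod} almost verbatim; the only genuinely new ingredient is that the centre of $\mathcal C$ no longer lies on a coordinate axis but on the Euclidean line carrying $E_\varphi$, which shifts its real part and so produces the $\cos(\varphi\pm\beta)$ form of the intersection points.

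First I would locate the centre. Since $\mathcal C$ is orthogonal to $E_\varphi$ at $se^{i\varphi}$, its tangent there is perpendicular to $E_\varphi$, so the radius at that point is parallel to $e^{i\varphi}$ and $C$ lies on the line $\mathbb Re^{i\varphi}$. Hence $C=(s\pm R)e^{i\varphi}$, and the two points in which $\mathcal C$ meets this line are $se^{i\varphi}$ and $(s\pm 2R)e^{i\varphi}$. The hypothesis that $se^{i\varphi}$ is the \emph{unique} such point on the open ray rules out the $+$ sign (which would leave $(s+2R)e^{i\varphi}$ on the ray as well) and forces
$$C=(s-R)e^{i\varphi},\qquad s-2R\le 0,$$
so already $R\ge\frac{s}{2}$.

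Next I would impose the angle of intersection. As in Lemma \ref{circ_geod}, and consistently with the ``outside'' orientation of Proposition \ref{curv}, a Euclidean circle meets $\mathbb R$ at angle $\beta$ exactly when the signed height of its centre equals $R\cos\beta$; here $\Im C=(s-R)\sin\varphi$, so
$$(s-R)\sin\varphi=R\cos\beta\ \Longrightarrow\ R=\frac{s\sin\varphi}{\sin\varphi+\cos\beta},$$
and substituting back into $C=(s-R)e^{i\varphi}$ yields the stated real and imaginary parts of $C$. For the intersection with $\mathbb R$ I would set $z=a$ real in $|z-C|^2=R^2$ and use $\Im C=R\cos\beta$ to obtain $(a-\Re C)^2=R^2\sin^2\beta$, whence $a=\Re C\mp R\sin\beta$. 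Writing $\Re C$ and $R\sin\beta$ over the common denominator $\sin\varphi+\cos\beta$ turns the numerator $s(\cos\varphi\cos\beta\mp\sin\varphi\sin\beta)=s\cos(\varphi\pm\beta)$ into the claimed $a_\mp$.

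Finally I would read off the range of $\beta$ from the two constraints already in hand. That $\mathcal C$ be an honest circle (finite $R>0$) forces $\sin\varphi+\cos\beta>0$, i.e. $\beta<\pi/2+\varphi$, with equality excluded as the degenerate line case (cf. Remark \ref{horocycle}); and the uniqueness constraint $R\ge\frac{s}{2}$ reads $\cos\beta\le\sin\varphi$, i.e. $\beta\ge\pi/2-\varphi$. Together these give $\beta\in[\pi/2-\varphi,\pi/2+\varphi)$. The only step that really needs care is this last one: unlike the geodesic case, the ``unique orthogonal point'' hypothesis is doing genuine work, since a circle through $se^{i\varphi}$ orthogonal to the whole line $\mathbb Re^{i\varphi}$ a priori meets the open ray in a second point, and it is precisely the exclusion of that second point that produces the lower bound $\pi/2-\varphi$. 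Everything else is the trigonometric bookkeeping inherited from Lemma \ref{circ_geod}.
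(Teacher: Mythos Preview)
Your proof is correct and follows essentially the same route as the paper: locate $C=(s-R)e^{i\varphi}$ from orthogonality and the uniqueness hypothesis, impose the angle condition with $\mathbb R$ to solve for $R$, and read off $C$, the intersection points, and the admissible range of $\beta$. The only cosmetic difference is that the paper carries out step two by translating by $-\Re C$ and invoking Lemma~\ref{circ_geod} on the top point of $\mathcal C$ (and then computes $a_\mp$ via the $\tan\frac{\beta}{2}$ formula from that lemma), whereas you extract the relation $\Im C=R\cos\beta$ directly and solve $|a-C|^2=R^2$; these are the same computation in slightly different packaging.
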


\begin{proof}
Since $\mathcal C\perp E_{\varphi}$, $C=ce^{i\varphi}$ for some $c\in\mathbb R$. Thus 
$$R=\left|ce^{i\varphi}-se^{i\varphi}\right| = |s-c|=s-c$$
because if $c=s+R$ then $(s+2R)e^{i\varphi}$ would be the second point of intersection $E_{\varphi}\cap \mathcal C$. The top point of $\mathcal C$ is 
$$C+iR=(s-R)e^{i\varphi}+iR=(s-R)\cos\varphi +i (s\sin\varphi + (1-\sin\varphi)R).$$
Using translation of $-\Re C$ we reduce the situation to Lemma \ref{circ_geod}. Now
$$R=\frac{s\sin\varphi +(1-\sin\varphi)R}{1=\cos\beta}$$
hence
$$R=\frac{s\sin\varphi}{\sin\varphi +\cos\beta} \ \ {\rm and}\ \ 
C=(s-R)e^{i\varphi} = \frac{s\cos\beta}{\sin\varphi +\cos\beta}e^{i\varphi}$$

\begin{figure}[h]
\centerline{
\includegraphics[scale=1]{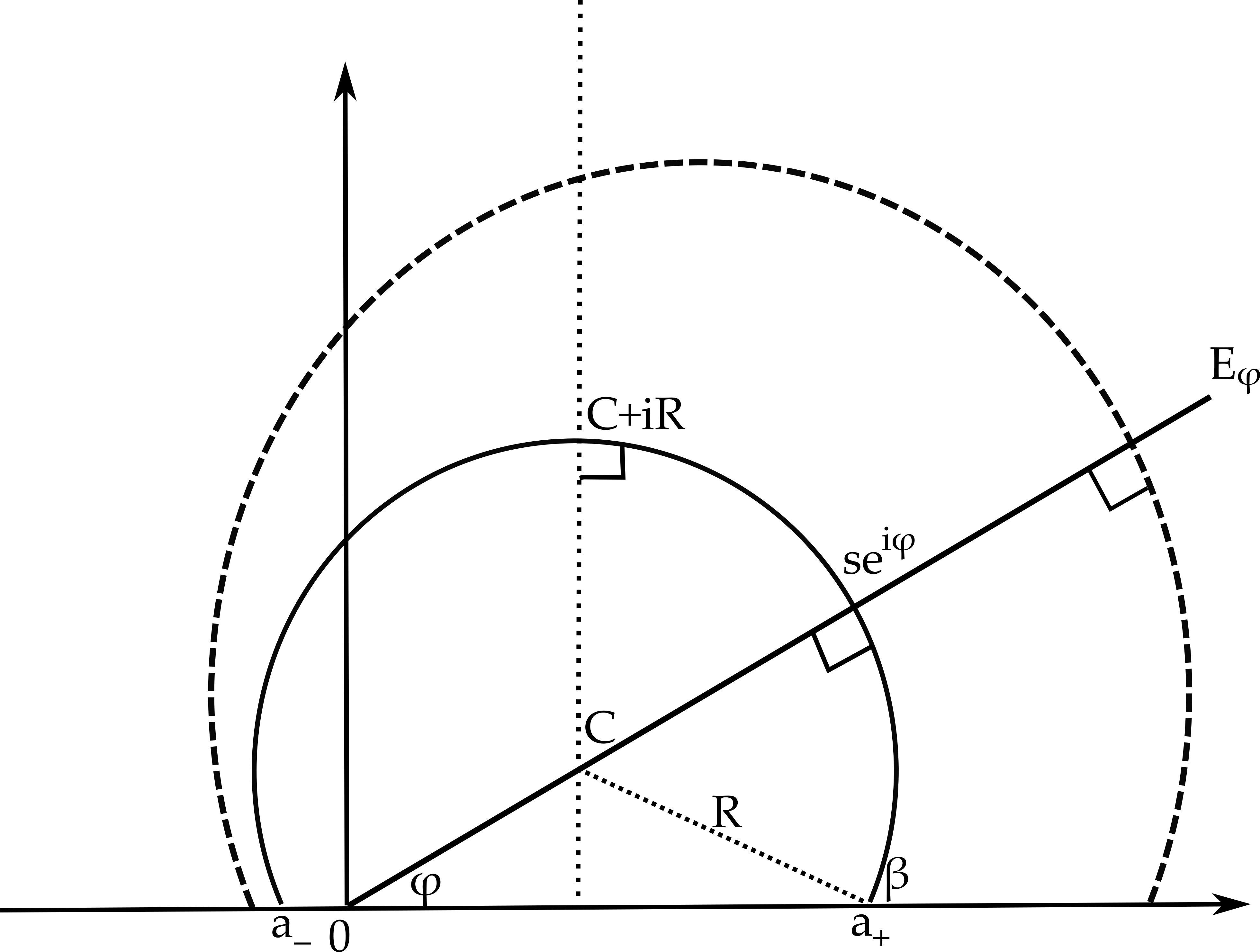}
}
\caption{Hypercycles orthogonal to a hypercycle}
\label{fig:fig3}
\end{figure}

Moreover, $\mathcal C$ intersects the real axis at points
\begin{align*}
\Re C & \pm (\Im C+R)\tan \frac{\beta}{2}=\frac{s\cos\varphi\cos\beta}{\sin\varphi+\cos\beta} \\
& \pm \left(\frac{s\sin\varphi \cos \beta}{\sin\varphi+\cos\beta}+\frac{s\sin\varphi}{\sin\varphi+\cos\beta}\right)
\frac{\sin\beta}{1+\cos\beta} \\
& =\frac{s\cos (\varphi \mp \beta)}{\sin\varphi+\cos\beta}.
\end{align*}

Observe that $\mathcal C$ cuts $E_{\varphi}$ at a unique point iff $|C|\le R$ i.e.
$|\cos\beta |\le \sin\varphi$ so $\frac{\pi}{2}-\varphi \le \beta <\frac{\pi}{2}+\varphi$. But this means that the left and right hand points of $\mathcal C\cap \mathbb R$ are respectively
$$a_-=\frac{s\cos (\varphi + \beta)}{\sin\varphi+\cos\beta}\le 0, 
\quad a_+=\frac{s\cos (\varphi - \beta)}{\sin\varphi+\cos\beta}>0.$$ 

\end{proof}

\begin{lem}\label{circs_hyperc}
Let  $\varphi \in \left(0,\frac{\pi}{2} \right)$, $0<s_1<s_2$,
and $\beta_1, \beta_2 \in \left[\frac{\pi}{2}-\varphi , \frac{\pi}{2}+\varphi\right)$.
Assume that $\mathcal C_1, \mathcal C_2\subset\mathbb C$ are circles  orthogonal to $E_{\varphi}$ at  points $s_1e^{i\varphi}$, $s_2e^{i\varphi}$ and meeting the real axis $\mathbb R$ at angles $\beta_1$, $\beta_2$, respectively.

Then $\mathcal C_1$ and $\mathcal C_2$ do not intersect in the upper half--plane $\Pi^{2,+}$ iff
$$ \frac{\ \ \frac{\sin\varphi+\cos\beta_2}{\cos(\varphi +\beta_2 )}\ \ }{\frac{\sin\varphi+\cos\beta_1}{\cos(\varphi +\beta_1 )}}\le \frac{s_2}{s_1} .$$
\end{lem}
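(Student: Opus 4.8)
The plan is to mirror the proof of Lemma \ref{circs_geod}, feeding in the explicit data of Lemma \ref{circ_hyperc}. First I would record that each $\mathcal C_i$ meets $\mathbb R$ at
$$a_{i-}=\frac{s_i\cos(\varphi+\beta_i)}{\sin\varphi+\cos\beta_i}\le 0<a_{i+}=\frac{s_i\cos(\varphi-\beta_i)}{\sin\varphi+\cos\beta_i},$$
and that the part of $\mathcal C_i$ lying in $\overline{\Pi^{2,+}}$ is a simple arc joining $a_{i-}$ to $a_{i+}$ through the point $s_ie^{i\varphi}\in E_\varphi$. As in Lemma \ref{circs_geod}, two such arcs fail to meet inside $\Pi^{2,+}$ exactly when their endpoint intervals on $\mathbb R$ are nested; they cannot be \emph{disjoint}, since $a_{i-}\le 0<a_{i+}$ forces both intervals to straddle the origin. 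Because $s_1<s_2$, the arc of $\mathcal C_2$ passes through $s_2e^{i\varphi}$, which lies strictly beyond the highest point $s_1e^{i\varphi}$ of $\mathcal C_1$ along the ray $E_\varphi$; hence $\mathcal C_2$ cannot be the inner arc. Thus non-intersection in $\Pi^{2,+}$ is equivalent to $[a_{1-},a_{1+}]\subseteq[a_{2-},a_{2+}]$, that is, to the two inequalities $a_{2-}\le a_{1-}$ and $a_{1+}\le a_{2+}$.

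Next I would convert the left-endpoint inequality into the stated form. Since $\varphi+\beta_i\in[\tfrac\pi2,\tfrac\pi2+2\varphi)$ we have $\cos(\varphi+\beta_i)\le 0$ (negative for $\beta_i>\tfrac\pi2-\varphi$), so dividing $a_{2-}\le a_{1-}$ by the negative quantity $\cos(\varphi+\beta_1)/(\sin\varphi+\cos\beta_1)$ reverses the inequality and, after the two minus signs cancel in the resulting ratio, yields precisely
$$\frac{\ \frac{\sin\varphi+\cos\beta_2}{\cos(\varphi+\beta_2)}\ }{\frac{\sin\varphi+\cos\beta_1}{\cos(\varphi+\beta_1)}}\le\frac{s_2}{s_1}.$$
This rearrangement is routine (the boundary case $\beta_1=\tfrac\pi2-\varphi$, where $a_{1-}=0$, follows by continuity).

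The crux, and the step I expect to be the main obstacle, is to show that this single inequality already characterises non-intersection, i.e.\ that under $s_1<s_2$ it forces the right-endpoint inequality $a_{1+}\le a_{2+}$ as well. Writing the two conditions as thresholds $s_2\ge s_2^{(-)}$ and $s_2\ge s_2^{(+)}$ on the ratio $s_2/s_1$, I would compare $s_2^{(-)}$ and $s_2^{(+)}$: after cancelling the common factors $\sin\varphi+\cos\beta_i$, this reduces to comparing $\cos(\varphi+\beta_1)\cos(\varphi-\beta_2)$ with $\cos(\varphi-\beta_1)\cos(\varphi+\beta_2)$, which by the product-to-sum formula is controlled by the sign of $\sin 2\varphi\,\sin(\beta_1-\beta_2)$. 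When $\beta_1\ge\beta_2$ one gets $s_2^{(-)}\ge s_2^{(+)}$, so $a_{2-}\le a_{1-}$ already gives $a_{1+}\le a_{2+}$; when $\beta_1<\beta_2$, monotonicity of $\beta\mapsto\cos(\varphi-\beta)/(\sin\varphi+\cos\beta)$ on $[\tfrac\pi2-\varphi,\tfrac\pi2+\varphi)$ forces $s_2^{(+)}\le s_1<s_2$, so $a_{1+}\le a_{2+}$ holds automatically. In either case the left-endpoint condition is binding, so disjointness in $\Pi^{2,+}$ is equivalent to the displayed inequality alone; since every step is reversible, this also yields the converse, completing the equivalence.
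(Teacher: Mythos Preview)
Your argument is correct, but it follows a different route from the paper's. The paper exploits the Euclidean reflection $\sigma$ in the line $\mathbb R e^{i\varphi}$: since both $\mathcal C_1$ and $\mathcal C_2$ are $\sigma$--invariant, their two intersection points (when they exist) form a $\sigma$--symmetric pair, and of the two the one lying on the $H_+$--side of $E_\varphi$ always has the larger imaginary part. Hence non--intersection in $\Pi^{2,+}$ reduces to a condition on that single point, which the paper reads off directly as the left--endpoint inequality $a_{2-}\le a_{1-}$; no separate treatment of $a_{+}$ is needed. Your approach instead first establishes the two--sided nesting $[a_{1-},a_{1+}]\subseteq[a_{2-},a_{2+}]$ by a Jordan--curve argument, and then spends a nontrivial trigonometric step (the product--to--sum comparison and the monotonicity of $\beta\mapsto \cos(\varphi-\beta)/(\sin\varphi+\cos\beta)$) to show the right--endpoint inequality is redundant. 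Both routes reach the same conclusion; the paper's symmetry argument is shorter and more geometric, while yours is more self--contained and makes explicit why only the $a_-$--condition survives---information the paper's proof leaves implicit.
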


\begin{proof}
Under these assumptions circles $\mathcal C_1$ and $\mathcal C_2$ do not intersect in $\Pi^{2,+}$ iff $\mathcal C_1$ is inside $\mathcal C_2$ (including internal tangency) or they intersect on the side of $-s_1e^{i\varphi}$ not "too high" i.e. upper intersection point is still under or on $\mathbb R$.

By the (Euclidean) symmetry in the hypercycle this is equivalent to the request that the left hand point of $\mathcal C_1\cap \mathbb R$ is to the right of the left hand point of $\mathcal C_2\cap \mathbb R$. Now it is enough to use Lemma \ref{circ_hyperc}.
\end{proof}

\begin{uw}\label{plane_hyperc}
If $\beta =\frac{\pi}{2}+\varphi$ the role of circle meeting $\mathbb R$ at this angle plays a straight
line and their unique common point is $\frac{s}{\cos\beta}$. 

If $\mathcal C_1$ is such a line $\mathcal C_2$ has no room to bend and must a line. For the same reason, circles preceeding a circle meeting $\mathbb R$ at angle $\frac{\pi}{2}-\varphi$ must make the same angle with $\mathbb R$.
\end{uw}

\begin{proof} \emph{(of Theorem \ref{route_hyperc})}
As in the proof of Theorem \ref{route_geod} we conclude that the only possible leaves are generalized hypercycles. Every of them is diffeomorphic to $\mathbb R^{n-1}$ and divides $\mathbb H^n$ into two parts diffeomorphic to $\mathbb R^n$. Suppose that a transversal meets a leaf $L$ twice. Then the transversal of $\mathbb R^n$  by $\mathbb R^{n-1}$ must be tangent to some leaf. This contradiction proves that the $\varphi$--hypercycle meets orthogonally every leaf of the totally umbilical foliation at most once.  

Likely Theorem \ref{route_geod} we reduce the situation to dimension $2$ with $A_{n,+}$ being the geodesic from which the $\varphi$--hypercycle is equidistant. Lemma \ref{circ_hyperc} implies $|h|\le \sin\varphi$. (i) and (iii) are explained in Remark \ref{plane_hyperc}.

To prove (ii) recall that $E_{\varphi}$ has arc--length parametrization $$\gamma (t)=e^{t\sin\varphi +i\varphi\ },\quad t\in\mathbb R$$
an use  Lemma \ref{circs_hyperc}. In fact, for given $t_1<t_2$ we have
$$
\left( e^{t_2-t_1}\right)^{\sin\varphi}=\frac{e^{t_2\sin\varphi}}{e^{t_1\sin\varphi}}
\ge \frac{\ \frac{\sin\varphi -h(t_2)}{\ - h(t_2)\cos\varphi- \sqrt{1-(h(t_2))^2}\, \sin\varphi\ }}{\ \frac{\sin\varphi -h(t_1)}{\ - h(t_1)\cos\varphi- \sqrt{1-(h(t_1))^2}\, \sin\varphi\ }}
$$
which needs only logarithm. 

For the converse, argument from Theorem \ref{route_geod} works similarly but the foliation orthogonal to one hypercycle does not fill all the $\mathbb H^n$. Anyway leaves of such a foliation have a limit (on both sides) which is an umbilical hypersurface. Domains bounded by hypercycles can be easily extended to a umbilical foliation of $\mathbb H^n$ for example adding leaves of the same mean curvature.

\begin{figure}[h]
\centerline{
\includegraphics[scale=0.9]{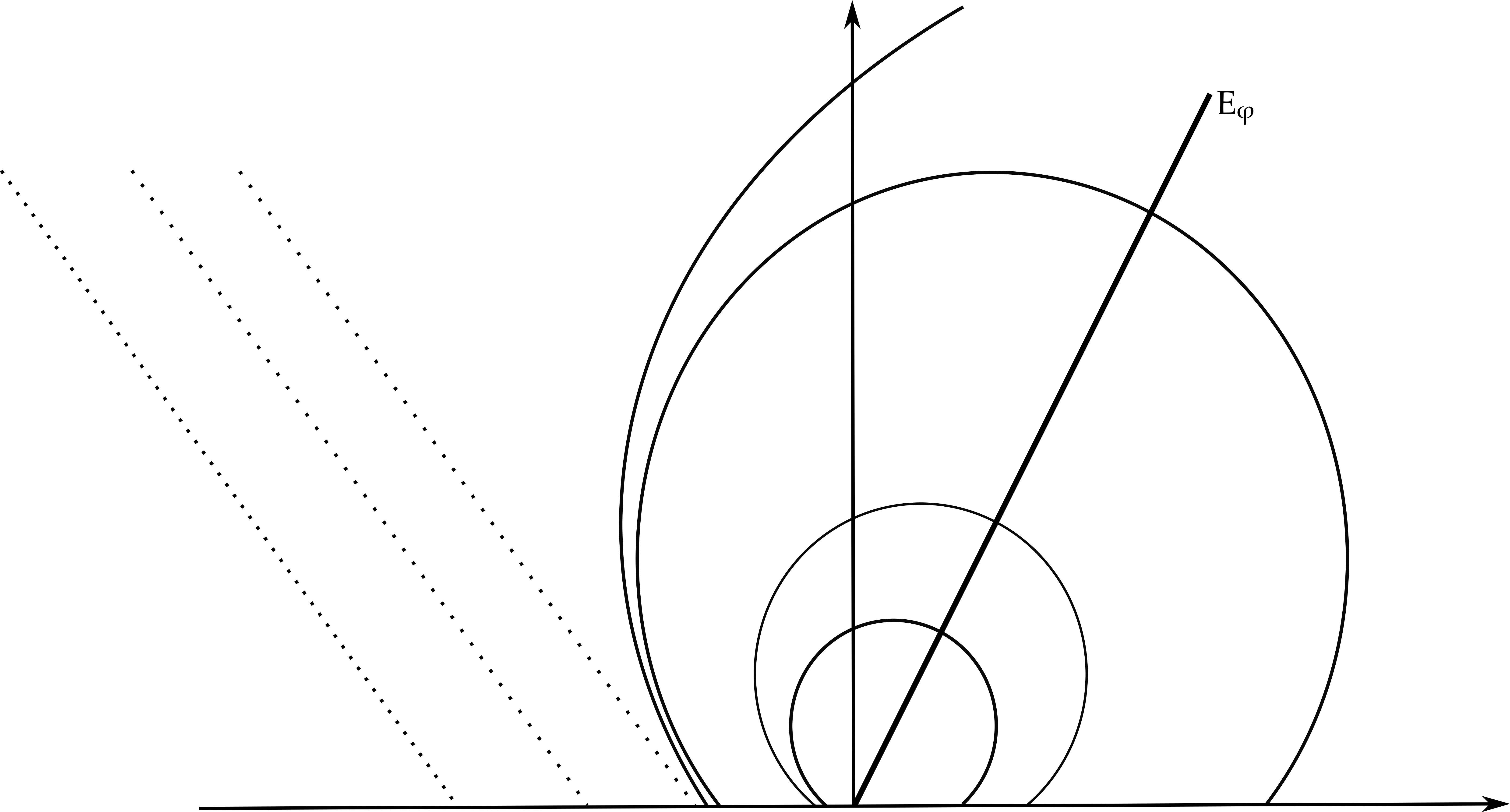}
}
\caption{Extension of umbilical foliation orthogonal to a hypercycle}
\label{fig:fig4}
\end{figure}

\end{proof}

Diffrentiation of (\ref{eq:h_hyperc})(ii) leads to

\begin{tw}
\label{route_hyperc_c1}
The function $h$ of mean curvature of leaves of totally umbilical   transversally $C^1$  codimension $1$ foliation of $\mathbb H^n$ along arc--length parametrized $\varphi$--hypercycle satisfies
$|h|\le \sin \varphi$ 
and there are $ t_-, t_+\in [-\infty,+\infty]$ such that
\begin{equation}\label{eq:h_c1_hyperc}
\begin{array}{rl}
({\rm i})   & h|_{(-\infty ,t_-]}\equiv -\sin\varphi, \\
({\rm ii})  & 
  h'(t)\ge \frac{(h(t) -\sin\varphi)\left(h(t)\cos\varphi+\sqrt{1-(h(t))^2}\,\sin\varphi\right)
\sqrt{1-(h(t))^2}}
{1-h\sin\varphi +\sqrt{1-(h(t))^2}\cos\varphi} \\ 
&  on\ (t_-,t_+)\\  
\\
({\rm iii})  & h|_{[t_+,+\infty )}\equiv \sin\varphi.
\end{array}
\end{equation}

Conversely,  if $h:\mathbb R\to [-\sin\varphi,\sin\varphi]$ is a $C^1$--function satisfying 
{\rm (\ref{eq:h_c1_hyperc})} 
then $h$ is an umbilical route along any geodesic line in $\mathbb H^n$.
\end{tw}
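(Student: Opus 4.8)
The plan is to follow verbatim the pattern of Theorem \ref{route_geod_c1}. A transversely $C^1$ foliation is in particular transversely $C^0$, so Theorem \ref{route_hyperc} already supplies the bound $|h|\le\sin\varphi$, the constant end pieces (i), (iii), and the breakpoints $t_-,t_+$; the only new content is to convert the Lipschitz estimate (\ref{eq:h_hyperc})(ii) into the pointwise differential inequality (\ref{eq:h_c1_hyperc})(ii). First I would abbreviate the quantity in (\ref{eq:h_hyperc})(ii) as
$$g(t)=\ln\frac{\sin\varphi-h(t)}{h(t)\cos\varphi+\sqrt{1-(h(t))^2}\,\sin\varphi},$$
which is legitimate on $(t_-,t_+)$: there $-\sin\varphi<h<\sin\varphi$ forces the numerator $\sin\varphi-h$ to be positive and, writing $h=\sin\psi$ with $\psi\in(-\varphi,\varphi)$, the denominator equals $\sin(\psi+\varphi)>0$. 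The inequality displayed in the proof of Theorem \ref{route_hyperc} (whose two fractions each carry a minus sign in the denominator, so the minus signs cancel in the ratio) reads, after the logarithm noted there, precisely $g(t_2)-g(t_1)\le(\sin\varphi)(t_2-t_1)$ for $t_1<t_2$. Dividing by $t_2-t_1$ and letting $t_2\to t_1^{+}$ gives the one--sided derivative bound $g'(t)\le\sin\varphi$. As in the geodesic case, only this single direction of the estimate is geometrically active, which is why (\ref{eq:h_c1_hyperc})(ii) is a single inequality.

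Then I would compute $g'$ and solve for $h'$. Writing $w=\sqrt{1-h^2}$, so $w'=-hh'/w$, differentiation gives
$$g'=\frac{-h'}{\sin\varphi-h}-\frac{h'\bigl(w\cos\varphi-h\sin\varphi\bigr)}{w\,(h\cos\varphi+w\sin\varphi)}=-h'B,$$
where $B$ is the bracketed factor. The decisive simplification is that putting $B$ over the common denominator $(\sin\varphi-h)\,w\,(h\cos\varphi+w\sin\varphi)$ and using $h^2+w^2=1$ collapses its numerator to $\sin\varphi\,(1-h\sin\varphi+w\cos\varphi)$, so that
$$B=\frac{\sin\varphi\,(1-h\sin\varphi+w\cos\varphi)}{(\sin\varphi-h)\,w\,(h\cos\varphi+w\sin\varphi)}.$$
Since $1-h\sin\varphi+w\cos\varphi>0$ and every factor of the denominator is positive on $(t_-,t_+)$, we have $B>0$; hence $g'\le\sin\varphi$ is equivalent to $h'\ge-\sin\varphi/B$, and substituting the expression for $B$ reproduces verbatim the right--hand side of (\ref{eq:h_c1_hyperc})(ii). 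At the endpoints $h=\pm\sin\varphi$ both sides vanish (the factor $h-\sin\varphi$ or $h\cos\varphi+w\sin\varphi$ does), so the inequality persists and matches (i) and (iii).

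For the converse I would reverse these algebraic steps. Given a $C^1$ function $h:\mathbb R\to[-\sin\varphi,\sin\varphi]$ satisfying (\ref{eq:h_c1_hyperc}), the positivity of $B$ turns (ii) back into $g'\le\sin\varphi$ on $(t_-,t_+)$; integrating recovers $g(t_2)-g(t_1)\le(\sin\varphi)(t_2-t_1)$, i.e. the relevant direction of (\ref{eq:h_hyperc})(ii), while (i) and (iii) are unchanged. Thus $h$ satisfies the hypotheses of the converse half of Theorem \ref{route_hyperc} and is an umbilical route along any $\varphi$--hypercycle.

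I expect the only real obstacle to be the bookkeeping in the middle paragraph: verifying the sign of the multiplier $B$ (so that dividing the inequality by it does not reverse it) and carrying out the cancellation in its numerator. That cancellation is exactly where the seemingly baroque right--hand side of (\ref{eq:h_c1_hyperc})(ii) becomes forced -- the cross terms $\pm h\,w\cos\varphi$ annihilate and $h^2+w^2=1$ produces the compact factor $1-h\sin\varphi+w\cos\varphi$ in the denominator.
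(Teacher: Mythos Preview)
Your proposal is correct and follows exactly the approach indicated in the paper, namely differentiating the Lipschitz condition~(\ref{eq:h_hyperc})(ii) from Theorem~\ref{route_hyperc}; the paper itself gives only that one-line hint, so you have simply supplied the omitted algebra (the computation of $B$, the sign check, and the identification of $-\sin\varphi/B$ with the right-hand side of~(\ref{eq:h_c1_hyperc})(ii)). Your observation that only the one-sided estimate $g(t_2)-g(t_1)\le(\sin\varphi)(t_2-t_1)$ is actually derived in the proof of Theorem~\ref{route_hyperc} and used in its converse (via Lemma~\ref{circs_hyperc}) is accurate and justifies why a single inequality appears in~(\ref{eq:h_c1_hyperc})(ii).
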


\begin{uw}
The condition from Theorem \ref{route_hyperc_c1} looks shorter in terms of angle of intersection
$$\beta ' \ge \frac{(\sin\varphi+\cos\beta)\, \cos(\varphi+\beta)}{1+\sin(\varphi +\beta)}.$$
Indeed, it is enough to reformulate inequality
$$\left( \frac{\sin\varphi+\cos\beta}{-\cos(\varphi+\beta)}\right)' \le \sin\varphi .$$
\end{uw}

\medskip

Since any horocycle could be transformed into a line parallel to $\mathbb R^{n-1}\times \{ 0\}$, any sphere orthogonal to it intersects the horocycle in two points. The only generalized hyperspheres orthogonal to  the horocycle are $0$-hyperspheres represented by vertical hyperplanes. This motivates the following   
\begin{wn}
The only umbilical route along a horocycle is $h\equiv 0$.
\end{wn}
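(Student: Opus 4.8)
The plan is to follow the geometric reduction that served the geodesic and hypercycle cases, and show that for a horocycle the only admissible constant angle of intersection is forced, leaving $h\equiv 0$ as the sole route. First I would apply a conformal isometry of $\Pi^{n,+}$ taking the horocycle to a horizontal line $L=\mathbb R\times\{(0,\dots,0,a)\}$, i.e.\ a line parallel to the ideal boundary $\mathbb R^{n-1}\times\{0\}$; the arc--length parametrization of such a horocycle is $t\mapsto (t,0,\dots,0,a)$ up to the conformal factor. As in Theorems \ref{route_geod} and \ref{route_hyperc}, the candidate leaves of a codimension $1$ totally umbilical foliation are exactly the generalized hyperspheres (geodesic spheres being excluded by the Euler characteristic argument), so a leaf through a point of $L$ is represented by a sphere or a vertical hyperplane meeting the ideal boundary at some angle $\beta$.

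Next I would use the elementary Euclidean fact that any genuine sphere (finite radius, $\beta\ne 0$) that meets $L$ orthogonally meets the \emph{horizontal} line $L$ in two distinct points, because $L$ passes through the interior of the sphere's equatorial region. Thus a sphere--type leaf would be crossed twice by the transversal $\gamma$, contradicting the requirement that the transversal meet each leaf orthogonally at most once (the same obstruction, an intermediate tangency forced by the two crossings, that appears in the proof of Theorem \ref{route_hyperc}). This rules out every leaf whose representative has finite radius and positive intersection angle, leaving only the degenerate generalized hyperspheres orthogonal to $L$: the vertical hyperplanes, which are precisely the $0$--hyperspheres (totally geodesic hypersurfaces with $\beta=\tfrac{\pi}{2}$, intersecting $L$ orthogonally and once). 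By Proposition \ref{curv} such leaves have mean curvature $h=-\cos\tfrac{\pi}{2}=0$.

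Finally I would check consistency and completeness: the family of all vertical hyperplanes orthogonal to $L$ is pairwise disjoint in $\Pi^{n,+}$ and foliates the whole half--space, so $h\equiv 0$ genuinely is an umbilical route, and the preceding steps show it is the only one. The main obstacle I anticipate is the careful bookkeeping of the limiting cases: a horosphere ``centered'' at the ideal endpoint of $L$ could a priori serve as a leaf and is tangent to $L$ rather than transverse, so I must verify that such a horosphere cannot appear as an orthogonal leaf of a foliation transverse to $\gamma$ (the tangency argument from Remark \ref{horocycle} adapts here), and likewise confirm that no half--hyperplane other than the vertical ones is orthogonal to a horizontal horocycle. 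Once these degenerate configurations are excluded, the conclusion $h\equiv 0$ follows without further calculation.
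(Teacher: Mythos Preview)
Your proposal is correct and follows essentially the same route as the paper: transform the horocycle to a horizontal line in $\Pi^{n,+}$, observe that any genuine sphere orthogonal to that line must meet it twice (since the line then passes through the center), and conclude that only the vertical hyperplanes --- totally geodesic, hence $h=0$ --- survive. The paper dispatches this in a single sentence preceding the corollary, while you add the converse check and a discussion of the degenerate horosphere case, but the underlying argument is identical.
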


\begin{prz}
\begin{enumerate}
\item A family of disjoint totally geodesic hypersurfaces orthogonal to a hypercycle 
at any of its point
foliates whole $\mathbb H^n$.
\item Constant curvature foliation with $h\equiv \sin\varphi$ is represented by parallel hyperplanes.
\item Mean curvature of leaves of a foliation by concentric spheres (with the center outside $\Pi^{n,+}$) is orthogonal to some  $\varphi$--hypercycle varies over $(-\sin\varphi ,\sin\varphi)$ along the hypercycle
but on remaining domain of $\mathbb H^n$ could include even horospheres. 
\end{enumerate}
\end{prz}

\section{Final remarks}

\begin{fa}
At any point of a curve in $\mathbb H^n$ of bounded geodesic curvature $|k_g|\le 1$ one can find  a generalized hypercycle in contact of order $2$. Our result give some explanation only for $n=2$.

If a curve exceeds curvature $1$ then umbilical routes disappear --- leaves orthogonal to such a curve intersect even in totally geodesic case like Ferus classification \cite{F}.

\end{fa}

\begin{fa}
If $k_g(p)$ and $k_n(p)$ denote respectively the norm of the second fundamental form of the leave at $p$ and  geodesic curvature of an orthogonal transversal then $k_g^2+k_n^2\le 1$ in case of totally umbilical foliations along hypercycles. 

This is a very special case of  of Hadamard foliations (cf. \cite{Cz2}) for which this estimation is suspected to be true.
\end{fa}

\begin{fa}\label{shadok}
Totally umbilical foliations of $\mathbb H^n$ could be described in a purely conformal way. In fact, the ideal boundary and totally umbilical leaves are represented by generalized spheres and the mean curvature depends only on angle of intersection. 

This provides description of such objects in the space of spheres --- de Sitter space which is quadric in the Lorentz space. The author and Langevin gave in \cite{CzL} a local classification based on boosted time cones and deduce some global facts on curvature of orthogonal transversal.
\end{fa}

\begin{fa}
In the paper we restricted to hypercyclic orthogonal transversals as the most similar to totally umbilical 
higher--dimensional submanifolds.

We could define a \emph{bi--umbilical foliation} as totally umbilical foliation with a/all transversals being totally umbilical. The classification of bi--umbilical foliation on $\mathbb H^n$ may be of some interest even for codimension $2$ in $\mathbb H^4$.   
\end{fa}

\end{document}